\newtheorem{proposition}{Proposition}[section]
\newtheorem{corollary}[proposition]{Corollary}
\newtheorem{theorem}[proposition]{Theorem}
\theoremstyle{definition}
\newtheorem{definition}[proposition]{Definition}
\newtheorem{example}[proposition]{Example}
\newtheorem{examples}[proposition]{Examples}
\newtheorem{remark}[proposition]{Remark}
\newcommand{\thlabel}[1]{\label{th:#1}}
\newcommand{\thref}[1]{Theorem~\ref{th:#1}}
\newcommand{\selabel}[1]{\label{se:#1}}
\newcommand{\prlabel}[1]{\label{pr:#1}}
\newcommand{\prref}[1]{Proposition~\ref{pr:#1}}
\newcommand{\colabel}[1]{\label{co:#1}}
\newcommand{\coref}[1]{Corollary~\ref{co:#1}}
\newcommand{\relabel}[1]{\label{re:#1}}
\newcommand{\reref}[1]{Remark~\ref{re:#1}}
\newcommand{\exlabel}[1]{\label{ex:#1}}
\newcommand{\delabel}[1]{\label{de:#1}}
\newcommand{\deref}[1]{Definition~\ref{de:#1}}
\newcommand{\eqlabel}[1]{\label{eq:#1}}
\newcommand{\equref}[1]{(\ref{eq:#1})}
\newcommand{\Aut}{{\rm Aut}\,}
\def\ZZ{{\mathbb Z}}
\newcommand{\Cc}{\mathcal{C}}
\newcommand{\Dd}{\mathcal{D}}
\def\*C{{}^*\hspace*{-1pt}{\Cc}}
\def\text#1{{\rm {\rm #1}}}
\begin{document}

\title[Extending structures I: the level of groups]
{Extending structures I: the level of groups}

\author{A. L. Agore}
\thanks{A.L. Agore is research fellow ''Aspirant'' of FWO-Vlaanderen.
This work was supported by a grant of the Romanian National
Authority for Scientific Research, CNCS-UEFISCDI, grant no.
88/05.10.2011.}
\address{Faculty of Engineering, Vrije Universiteit Brussel, Pleinlaan 2, B-1050 Brussels, Belgium}
\email{ana.agore@vub.ac.be and ana.agore@gmail.com}
\author{G. Militaru}
\address{Faculty of Mathematics and Computer Science, University of Bucharest, Str.
Academiei 14, RO-010014 Bucharest 1, Romania}
\email{gigel.militaru@fmi.unibuc.ro and gigel.militaru@gmail.com}
\subjclass[2010]{20A05, 20E22, 20D40}

\keywords{The extension problem, (bi)crossed product, knit
(Zappa-Szep) product.}


\begin{abstract} Let $H$ be a group and $E$ a set such that $H
\subseteq E$. We shall describe and classify up to an isomorphism
of groups that stabilizes $H$ the set of all group structures that
can be defined on $E$ such that $H$ is a subgroup of $E$. A
general product, which we call the unified product, is constructed
such that both the crossed product and the bicrossed product of
two groups are special cases of it. It is associated to $H$ and to
a system $\bigl( (S, 1_S, \ast), \triangleleft, \, \triangleright,
\, f \bigl)$ called a group extending structure and we denote it
by $H \ltimes S$. There exists a group structure on $E$ containing
$H$ as a subgroup if and only if there exists an isomorphism of
groups $(E, \cdot) \cong H \ltimes S$, for some group extending
structure $\bigl( (S, 1_S, \ast), \triangleleft, \,
\triangleright, \, f \bigl)$. All such group structures on $E$ are
classified up to an isomorphism of groups that stabilizes $H$ by a
cohomological type set ${\mathcal K}^{2}_{\ltimes} (H, (S, 1_S))$.
A Schreier type theorem is proved and an explicit example is
given: it classifies up to an isomorphism that stabilizes $H$ all
groups that contain $H$ as a subgroup of index 2.
\end{abstract}

\maketitle

\section*{Introduction}
The present paper is the starting point for a study concerning
what we have called the \emph{extending structures problem} or the
ES-problem for short. The ES-problem can be formulated at the most
general level using category theory language \cite{am-2011}. At
the level of groups the ES-problem has a very tempting statement:

\textbf{(Gr) Extending structures problem.} \textit{Let $H$ be a
group and $E$ a set such that $H \subseteq E$. Describe and
classify up to an isomorphism that stabilizes $H$ the set of all
group structures $\cdot$ that can be defined on $E$ such that $H$
is a subgroup of $(E, \cdot)$.}

In other words, the ES-problem is trying to provide an answer to
the very natural question: to what extent a group structure on $H$
can be extended beyond $H$ to a bigger set which contains $H$ as a
subset in such a way that $H$ would become a subgroup within the
new structure. The ES-problem generalizes and unifies two famous
problems in the theory of groups which served as models for our
approach: the \emph{extension problem} of H\"{o}lder \cite{holder}
and the \emph{factorization problem} of Ore \cite{Ore}. Let us
explain this briefly. Consider two groups $H$ and $G$. The
extension problem of H\"{o}lder consists of describing and
classifying all groups $E$ containing $H$ as a normal subgroup
such that $E/H \cong G$. An important step related to the
extension problem was made by Schreier: any extension $E$ of $H$
by $G$ is equivalent to a crossed product extension. For more
details and references on the extension problem we refer to the
monograph \cite{adem}.

The factorization problem is a "dual" of the extension problem and
it was formulated by Ore \cite{Ore}. It consists of describing and
classifying up to an isomorphism all groups $E$ that factorize
through $H$ and $G$: i.e. $E$ contains $H$ and $G$ as subgroups
such that $E = H G$ and $ H \cap G = \{1\}$. The dual version of
Schreier's theorem was proven by Takeuchi \cite{Takeuchi}: the
bicrossed product associated to a matched pair of groups $(H, G,
\triangleleft, \triangleright)$ was constructed and it was proven that a group $E$
factorizes through $H$ and $G$ if and only if $E$ is isomorphic to
a bicrossed product $H \bowtie \, G $. The factorization problem
is even more difficult than the more popular extension problem and
little progress has been made since then. For instance, in the
case of two cyclic groups $H$ and $G$, not both finite, the
problem was started by L. R\'{e}dei in \cite{Redei} and finished
by P.M. Cohn in \cite{Cohn}. If $H$ and $G$ are both finite cyclic
groups the problem seems to be still open, even though J. Douglas
\cite{Douglas} has devoted four papers to the subject. The case of
two cyclic groups, one of them being of prime order, was solved in
\cite{ACIM}. The bicrossed product, also known as knit product or
Zappa-Szep product in the theory of groups, appeared for the first
time in a paper by Zappa \cite{zappa} and it was rediscovered
later on by Szep \cite{Szep}.

In the construction of a crossed product a weak action $\alpha : G
\rightarrow \Aut (H)$ and an $\alpha$-cocycle $f : G \times G
\rightarrow H$ are used, while the construction of a bicrossed
product involves two compatible actions $\triangleright: G \times H
\rightarrow H$ and $\triangleleft : G \times H \rightarrow G$. Even if
their starting points are different, the two constructions have
something in common: the crossed product structure as well as the
bicrossed product structure are defined on the same set, namely $H
\times G$ and $H \cong (H, 1)$ is a subgroup in both the crossed
as well as the bicrossed product. Furthermore, $H \cong (H, 1)$ is
a normal subgroup in the crossed product. Conversely, any group
structure on a set $E$ containing $H$ as a normal subgroup can be
reconstructed from $H$ and the quotient $E/H$ as a crossed product
(see \reref{recon} for details). Now, if we drop the normality
assumption on $H$ the construction can not be performed anymore
and we have to come up with a new method of reconstructing a group
$E$ from a given subgroup and another set of data. This is what we
do in \thref{splitrec}, which is the first important result of the
paper. Let $H \leq E$ be a subgroup of a group $E$. Using the
axiom of choice, we can pick a retraction $p : E \to H$ of the
canonical inclusion $i : H \hookrightarrow E$ which is left
$H$-'linear'. Having this application we consider the pointed set
$S := p^{-1} (1)$, the fiber of $p$ in $1$. The group $H$ and the
pointed set $S$ are connected by four maps arising from $p$: two
actions $ \triangleright = \triangleright_p: S\times H \to H$, $\triangleleft = \triangleleft_p :
S\times H \to S$, a cocycle $f = f_p : S\times S \to H$ and a
multiplication $\ast = \ast_p : S\times S \to S$ constructed in
\thref{splitrec}. Using these maps, we shall prove that there
exists an isomorphism of groups $E \cong (H\times S, \cdot)$,
where the multiplication $\cdot$ on the set $H\times S$ is given
by
\begin{equation}\eqlabel{inmultireau00}
(h_{1}, s_{1}) \cdot (h_{2}, s_{2}) := \Bigl( \, h_{1} (s_{1}
\triangleright h_{2}) f(s_{1} \triangleleft h_{2}, s_{2}), \,\,
(s_{1} \triangleleft h_{2}) \ast s_{2} \Bigl )
\end{equation}
for all $h_1$, $h_2 \in H$ and $s_1$, $s_2\in S$. In other words,
even if we drop the normality assumption, the group $E$ can still
be rebuilt from a subgroup $H$ and the fiber $S$ of an $H$-linear
retraction. Moreover, any group structure $\cdot$ that can be
defined on a set $E$ such that a given group $H$ will be contained
as a subgroup has the form \equref{inmultireau00} for some system
$(S, \triangleright, \triangleleft, f, \ast )$. This new type of product will be
called \emph{unified product} and it is easily seen that both the
crossed and the bicrossed product of groups are special cases of
it. In the next step we will perform the abstract construction of
the unified product $H \ltimes S$: it is associated to a group $H$
and a system of data $\Omega (H) = \bigl( (S, 1_S, \ast),
\triangleleft, \, \triangleright, \, f \bigl)$ called extending
datum of $H$. \thref{constr} establishes the system of axioms that
has to be satisfied by $\Omega (H)$ such that $H\times S$ with the
multiplication defined by \equref{inmultireau00} becomes a group
structure, i.e. it is a unified product. In this case $\Omega (H)
= \bigl( (S, 1_S, \ast), \triangleleft, \, \triangleright, \, f
\bigl)$ will be called a \emph{group extending structure} of $H$.
Based on \thref{splitrec} and \thref{constr} we answer the
description part of the ES-problem in \coref{primaint}.
\thref{univers} shows the universality of the construction: the
unified product is at the same time an initial object in a certain
category and a final object in another category, which is not the
dual of the first one. The answer to the classification part of
the ES-problem is given in \thref{clasif1a}: the set of all group
structures $\cdot$ that can be defined on $E$ such that $H \leq
(E, \cdot)$ are classified up to an isomorphism of groups $\psi :
(E, \cdot) \to (E, \cdot')$ that stabilizes $H$ by a cohomological
type set ${\mathcal K}^{2}_{\ltimes} (H, (S, 1_S))$ which is
explicitly constructed. As a special case, a more restrictive
version of the classification is given in \coref{clasif2bb}, which
is a general Schreier theorem for unified products. This time all
unified products $H\ltimes S$ are classified up to an isomorphism
of groups that stabilizes both $H$ and $S$ by a set ${\mathcal
H}^{2}_{\ltimes} (H, (S, 1_S), \triangleleft)$ which plays for the
ES-problem problem the same role as the second cohomology group
from H\"{o}lder's extension problem. An explicit example is given in
\prref{indice2} where all groups that contain $H$ as a
subgroup of index $2$ are classified up to an isomorphism that
stabilizes $H$.

\section{Preliminaries}\selabel{prel}
Let $(S, 1_S)$ be a pointed set, i.e. $S$ is a non-empty set and
$1_S \in S$ is a fixed element in $S$. The group structures on a
set $H$ will be denoted using multiplicative notation and the unit
element will be denoted by $1_H$ or only $1$ when there is no
danger of confusion. $\Aut (H)$ denotes the group of automorphisms
of a group $H$ and $|S|$ the cardinal of a set $S$. A map $r: S
\to H$ is called unitary if $r(1_S) = 1_H$. $S$ is called a right
$H$-set if there exists a right action $\triangleleft : S \times H
\rightarrow S$ of $H$ on $S$, i.e.
\begin{equation}\eqlabel{ES1}
s \triangleleft (h_1 h_2) = (s\triangleleft {h_1} )\triangleleft
h_2 \quad \text{and} \quad s \triangleleft {1_H}= s
\end{equation}
for all $s\in S$, $h_1$, $h_2 \in H$. The action $\triangleleft : S \times
H \rightarrow S$ is called the trivial action if $s \triangleleft h = s$,
for all $s\in S$ and $h\in H$. Similarly the maps $ \triangleright : S
\times H \to H$ and $f: S \times S \to H$ are called trivial maps
if $ s \triangleright h = h$ and respectively $f (s_1, s_2) = 1_H$, for all
$s$, $s_1$, $s_2 \in S$ and $h\in H$. If $G$ is a group and
$\alpha : G \rightarrow \Aut (H)$ is a map we use the similar
notation $ \alpha (g) (h) = g\triangleright h$, for all $g\in G$
and $h\in H$.

\subsection*{Crossed product of groups}\selabel{1.1}
A \textit{crossed system} of groups is a quadruple $(H, G, \alpha,
f)$, where $H$ and $G$ are two groups, $\alpha : G \rightarrow
\Aut (H)$ and $f : G \times G \rightarrow H$ are two maps such
that the following compatibility conditions hold:
\begin{eqnarray}
g_1 \triangleright (g_2\triangleright h) &=& f(g_1, g_2) \,
\bigl((g_1g_2)\triangleright  h \bigl)\, f(g_1, g_2)^{-1}
\eqlabel{WA} \\
f(g_1,\, g_2)\, f(g_1 g_2, \, g_3) &=&  \bigl(g_1 \triangleright
f(g_2, \, g_3) \bigl) \, f(g_1, \, g_2g_3) \eqlabel{CC}
\end{eqnarray}
for all $g_1$, $g_2$, $g_3 \in G$ and $h\in H$. The crossed system
$\Gamma = (H, G, \alpha, f)$ is called \textit{normalized} if
$f(1, 1) = 1$. The map $\alpha : G \rightarrow \Aut (H)$ is called
a \textit{weak action} and $f : G \times G \rightarrow H$ is
called an $\alpha$-\textit{cocycle}. Let $H \#_{\alpha}^{f} \, G :
= H\times G$ as a set with a binary operation defined by the
formula:
\begin{equation}\eqlabel{41}
(h_1,\, g_1)\cdot (h_2, \, g_2) : = \bigl( h_1 (g_1 \triangleright
h_2) f(g_1, \, g_2), \, g_1g_2 \bigl)
\end{equation}
for all $h_1$, $h_2 \in H$, $g_1$, $g_2 \in G$. It is well known
that the multiplication on $H \#_{\alpha}^{f} \, G$ given by
\equref{41} is associative if and only if $(H, G, \alpha, f)$ is a
crossed system (see for instance \cite[Theorem 2.3]{am1}). In this
case $\bigl ( H \#_{\alpha}^{f} \, G, \, \cdot \bigl)$ is a group
with unit $1_{H \#_{\alpha}^{f} \, G} = \bigl( f(1, 1) ^{-1},
\, 1\bigl)$ called the \textit{crossed product of $H$ and $G$}
associated to the crossed system $(H, G, \alpha, f)$. A crossed
product with $f$ the trivial cocycle (that is $f(g_1, \, g_2) =
1_H$, for all $g_1$, $g_2 \in G$) is just $H \ltimes_{\alpha} G$,
the semidirect product of $H$ and $G$.

\begin{remark}\relabel{recon}
Schreier's theorem states that any extension $E$ of $H$ by a
group $G$ is equivalent to a crossed product extension. We recall
in detail this construction for further use. Let $H\unlhd E$ be a
normal subgroup of a group $E$ and $G := E/H$ the quotient group.
Let $\pi : E \to G$ be the canonical projection and $\chi : G \to
E$ be a section as a map of $\pi$ with $\chi (1_G) = 1_E$. Using
$\chi$ we define a weak action $\alpha$ and a cocycle $f$ by the
formulas:
\begin{equation}\eqlabel{1act}
\alpha : G \rightarrow \Aut (H), \qquad \, \alpha (g) (h) = g \triangleright
h : = \chi (g) h \chi (g)^{-1}
\end{equation}
\begin{equation}\eqlabel{1coc}
f : G \times G \rightarrow H, \qquad f (g_1, g_2) := \chi (g_1)
\chi (g_2) \chi (g_1 g_2)^{-1}
\end{equation}
for all $g\in G$, $g_1$, $g_2 \in G$ and $h\in H$. Then $(H, G,
\alpha, f)$ is a normalized crossed system of groups and
$$
\theta : H \#_{\alpha}^{f} \, G \to E, \qquad \theta (h, g) := h
\chi (g)
$$
is an isomorphism of groups. For more details we refer the reader
to \cite[Theorem 2.6]{am1}.
\end{remark}

\subsection*{Bicrossed product of groups}\selabel{1.2}
A \textit{matched pair} of groups is a quadruple $ (H, G, \triangleright,
\triangleleft) $, where $\triangleright: G \times H \rightarrow H$ is a left action
of the group $G$ on the set $H$, $\triangleleft : G \times H \rightarrow G$
is a right action of the group $H$ on the set $G$ such that:
\begin{equation}\eqlabel{2}
g \triangleright (h_{1} h_{2}) = (g \triangleright h_{1})\bigl((g \triangleleft h_{1}) \triangleright
h_{2}\bigl)
\end{equation}
\begin{equation}\eqlabel{3}
(g_{1} g_{2}) \triangleleft h = \bigl(g_{1} \triangleleft (g_{2} \triangleright h)\bigl)(g_{2}
\triangleleft h)
\end{equation}
for all $h$, $h_{1}$, $h_{2} \in H$ and $g$, $g_{1}$, $g_{2} \in
G$ (see \cite{Takeuchi}). Let $H \bowtie \, G : = H\times G$ as a
set with a binary operation defined by the formula:
\begin{equation}
(h_{1}, g_{1}) \cdot (h_{2}, g_{2}) = \bigl( h_{1}(g_{1} \triangleright
h_{2}), \, (g_{1} \triangleleft h_{2})g_{2}\bigl)
\end{equation}
for all $h_{1}$, $h_{2} \in H$ and $g_{1}$, $g_{2} \in G$. It can
be easily shown that $H \bowtie \, G $ is a group with $(1_{H},
1_{G})$ as a unit if and only if $(H, G, \triangleright, \triangleleft)$ is a matched
pair of groups. In this case $H \bowtie \, G $ is called the
\textit{bicrossed product, doublecross product, knit product or
Zappa-Szep product} associated to the matched pair $(H, G, \triangleright,
\triangleleft)$. Takeuchi proved \cite{Takeuchi} that a group $E$
factorizes through two subgroups $H$ and $G$ if and only if $E$ is
isomorphic to a bicrossed product $H \bowtie \, G$ associated to a
matched pair $(H, G, \triangleright, \triangleleft)$.

\section{Group extending structures and unified products}
The abstract definition of the unified product of groups will
arise from the following elementary question subsequent to the
ES-problem: \textit{let $H \leq E$ be a subgroup in $E$. Can we
reconstruct the group structure on $E$ from the one of $H$ and
some extra set of datum?} First we note that Schreier's classical
construction from \reref{recon} can not be used anymore. Thus we
should come up with a new method of reconstruction. The next
theorem indicates the way we can perform this reconstruction.

\begin{theorem}\thlabel{splitrec}
Let $H \leq E$ be a subgroup of a group $E$. Then:
\begin{enumerate}
\item There exists a map $p : E \to H$ such that $p (1) = 1$ and
for any $h\in H$, $x\in E$
\begin{equation}\eqlabel{33ab}
p ( h \, x ) = h \, p (x)
\end{equation}
\item For such a map $p : E \to H$ we define $S = S_p := p^{-1}(1)
= \{ x\in E \, |\, p (x) = 1 \}$. Then the multiplication map
\begin{equation}\eqlabel{335b}
\varphi : H \times S \to E, \quad \varphi ( h, s) := h s
\end{equation}
for all $h\in H$ and $s\in S$ is bijective with the inverse given
for any $x\in E$ by
$$
\varphi^{-1} : E \to H \times S, \quad \varphi^{-1} (x) = \bigl(
p(x), \, p (x)^{-1} \, x \bigl)
$$
\item For $p$ and $S$ as above there exist four maps $ \triangleright =
\triangleright_p: S\times H \to H$, $\triangleleft = \triangleleft_p : S\times H \to S$, $f =
f_p : S\times S \to H$ and $\ast = \ast_p : S\times S \to S$ given
by the formulas
\begin{eqnarray*}
s \triangleright h &{:=}& p (sh), \,\,\,\,\,\,\,\,\,\,\,  s\triangleleft h := p(sh)^{-1} s h \eqlabel{acti12}\\
f(s_1, s_2) &{:=}& p (s_1 s_2), \,\,\,\, s_1 \ast s_2 := p(s_1
s_2)^{-1} s_1 s_2
\end{eqnarray*}
for all $s$, $s_1$, $s_2 \in S$ and $h\in H$. Using these maps,
the unique group structure $'\cdot'$ on the set $H \times S$ such
that $\varphi : (H \times S, \cdot) \to E$ is an isomorphism of
groups is given by:
\begin{equation}\eqlabel{inmultireau}
(h_{1}, s_{1}) \cdot (h_{2}, s_{2}) := \Bigl( \, h_{1} (s_{1}
\triangleright h_{2}) f(s_{1} \triangleleft h_{2}, s_{2}), \,\,
(s_{1} \triangleleft h_{2}) \ast s_{2} \Bigl )
\end{equation}
for all $h_1$, $h_2 \in H$ and $s_1$, $s_2\in S$.
\end{enumerate}
\end{theorem}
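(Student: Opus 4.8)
The plan is to treat the three parts in turn, with the real content concentrated in a single factorization identity that makes part (3) almost a formality once parts (1) and (2) are in place.

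For part (1), I would invoke the axiom of choice to select a complete set of representatives for the orbits of the left translation action of $H$ on $E$; these orbits are precisely the right cosets $Hx$. I would arrange that the representative of the coset $H = H \cdot 1$ is $1$ itself. Every $x \in E$ then has a unique expression $x = h\, x_0$ with $h \in H$ and $x_0$ the chosen representative of its coset, the uniqueness coming from right cancellation in the group $E$. Setting $p(x) := h$ gives $p(1) = 1$ immediately, while $hx = (h h_0) x_0$ shows $p(hx) = h h_0 = h\, p(x)$, which is \equref{33ab}.

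For part (2), the first thing to check is that the proposed inverse lands in $H \times S$: applying \equref{33ab} gives $p\bigl(p(x)^{-1}x\bigr) = p(x)^{-1} p(x) = 1$, so $p(x)^{-1}x \in S$. Then I would verify the two composites directly. On one side, $\varphi\bigl(\varphi^{-1}(x)\bigr) = p(x)\, p(x)^{-1} x = x$. On the other, for $(h,s) \in H \times S$ I would use $p(s) = 1$ together with \equref{33ab} to compute $p(hs) = h\, p(s) = h$, whence $\varphi^{-1}(hs) = \bigl(h,\, h^{-1}hs\bigr) = (h,s)$. This establishes that $\varphi$ is bijective with the stated inverse.

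Part (3) is where the construction is really tested, but once part (2) is available the unique group structure making $\varphi$ an isomorphism is forced to be the transported one, $(h_1,s_1)\cdot(h_2,s_2) := \varphi^{-1}\bigl(h_1 s_1 h_2 s_2\bigr)$; so the whole problem reduces to computing $\varphi^{-1}$ of a fourfold product. The key device, read directly off the definitions of the four maps, is the pair of factorizations
\[
sh = (s \rhd h)(s \lhd h), \qquad s_1 s_2 = f(s_1,s_2)(s_1 \ast s_2),
\]
in which, by the computation $p(s \lhd h) = p(s_1 \ast s_2) = 1$ already performed in part (2), the first factor lies in $H$ and the second in $S$. I would apply the first identity to rewrite $s_1 h_2$ as $(s_1 \rhd h_2)(s_1 \lhd h_2)$ and then the second to rewrite $(s_1 \lhd h_2)s_2$ as $f(s_1 \lhd h_2, s_2)\bigl((s_1 \lhd h_2)\ast s_2\bigr)$. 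This puts $h_1 s_1 h_2 s_2$ into the normal form $A\, B$ with $A = h_1(s_1\rhd h_2) f(s_1 \lhd h_2, s_2) \in H$ and $B = (s_1 \lhd h_2)\ast s_2 \in S$; applying $\varphi^{-1}$ and reading off $p(AB) = A$ and $p(AB)^{-1}AB = B$ yields exactly \equref{inmultireau}. The step I expect to be the main obstacle is setting up part (1) cleanly, since everything downstream rests on the left $H$-linearity \equref{33ab} — and it is this single property, not any homomorphism condition, that powers both the decomposition $E \cong H \times S$ and the passage of each product into $H$-times-$S$ normal form. The computation in part (3) is then purely bookkeeping: moving $H$-factors to the left past $S$-factors via the two factorization identities, with no associativity subtleties, because all manipulations take place inside the ambient group $E$.
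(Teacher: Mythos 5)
Your proposal is correct and takes essentially the same approach as the paper: the identical coset-representative construction of $p$, the identical bijectivity argument for $\varphi$ (whose details the paper leaves as "straightforward" and you fill in), and the identical transport of the group structure through $\varphi$. The only difference is organizational: in part (3) the paper proves the two identities $p(s_1 h_2 s_2) = (s_1 \rhd h_2) f(s_1 \lhd h_2, s_2)$ and $(s_1 \lhd h_2) \ast s_2 = p(s_1 h_2 s_2)^{-1} s_1 h_2 s_2$ by direct computation with $p$, whereas you reach the same product formula by putting $h_1 s_1 h_2 s_2$ into $H \cdot S$ normal form via the factorizations $sh = (s \rhd h)(s \lhd h)$ and $s_1 s_2 = f(s_1, s_2)(s_1 \ast s_2)$ and then reading off $\varphi^{-1}(AB) = (A, B)$ for $A \in H$, $B \in S$ --- a mathematically equivalent piece of bookkeeping.
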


\begin{proof}
(1) Using the axiom of choice we can fix $ \Gamma = (x_i)_{i\in I}
\subset E $ to be a system of representatives for the right
congruence modulo $H$ in $E$ such that $1 \in \Gamma$. Then for
any $x\in E$ there exists an unique $h_x \in H$ and an unique
$x_{i_0} \in \Gamma$ such that $x = h_x \, x_{i_0}$. Thus, there
exists a well defined map $p : E \to H$ given by the formula $p
(x) := h_x$, for all $x\in E$. As $1 \in \Gamma$ we have that $ p
(1) = 1$. Moreover, for any $h\in H$ and $x\in E$ we have that $h
x = h h_x \, x_{i_0}$. Thus $p (h x) = h h_x = h p(x)$, as needed.

(2) We note that $p (x)^{-1} x \in S$ as $ p \bigl( p (x)^{-1} \,
x \bigl) = p (x)^{-1} p(x) = 1 $, for all $x\in E$. The rest is
straightforward.

(3) First we note that $\triangleleft$ and $\ast$ are well defined maps.
Next, we can easily prove that the following two formulas hold:
\begin{eqnarray}
p(s_1 h_2 s_2) &=& (s_1 \triangleright h_2) f (s_1 \triangleleft h_2 , s_2)
\eqlabel{inmaa} \\
(s_1 \triangleleft h_2) \ast s_2 &=& p (s_1 h_2 s_2)^{-1} s_1 h_2 s_2
\eqlabel{inmab}
\end{eqnarray}
for all $s_1$, $s_2\in S$ and $h_2 \in H$. Indeed,
\begin{eqnarray*}
(s_1 \triangleright h_2) f (s_1 \triangleleft h_2 , s_2) &{=}& p(s_1 h_2) \, p\bigl(
(s_1
\triangleleft h_2) s_2 \bigl)\\
&{=}& p(s_1 h_2) \, p \bigl( p(s_1 h_2)^{-1} s_1 h_2 s_2  \bigl)\\
&\stackrel{\equref{33ab}} {=}& p (s_1 h_2 s_2)
\end{eqnarray*}
Similarly, we can prove that \equref{inmab} holds. Now, $\varphi :
H \times S \to E$ is a bijection between the set $H \times S$ and
the group $E$. Thus, there exists a unique group structure $\cdot$
on the set $H \times S$ such that $\varphi$ is an isomorphism of
groups. This group structure is obtained by transferring the group
structure from $E$ via the bijection $\varphi$, i.e. is given by:
\begin{eqnarray*}
(h_{1}, s_{1}) \cdot (h_{2}, s_{2}) &{=}& \varphi^{-1} \bigl(
\varphi (h_1, s_1) \varphi (h_2, s_2)  \bigl) \,\, = \varphi^{-1}
(h_1
s_1 h_2 s_2)\\
&{=}& \bigl( p(h_1 s_1 h_2 s_2), \, p(h_1 s_1 h_2 s_2)^{-1} h_1
s_1
h_2 s_2  \bigl)\\
&\stackrel{\equref{33ab}} {=}& \bigl( h_1 p(s_1 h_2 s_2), \, p(s_1
h_2 s_2)^{-1} s_1 h_2 s_2\bigl) \\
&\stackrel{\equref{inmaa}, \equref{inmab}} {=}& \Bigl( \, h_{1}
(s_{1} \triangleright h_{2}) f(s_{1} \triangleleft h_{2}, \,
s_{2}), \,\, (s_{1} \triangleleft h_{2}) \ast s_{2} \Bigl )
\end{eqnarray*}
for all $h_1$, $h_2 \in H$ and $s_1$, $s_2\in S$ as needed.
\end{proof}

\begin{remark}\relabel{cecon}
As $1\in S$ and $p(s) = 1$, for all $s\in S$ the maps $ \triangleright =
\triangleright_p$, $\triangleleft = \triangleleft_p$, $f = f_p $ and $\ast = \ast_p$
constructed in (3) of \thref{splitrec} satisfy the following
normalizing conditions:
\begin{equation}\eqlabel{nor1}
s \triangleright 1 = 1, \quad 1 \triangleright h = h, \quad 1
\triangleleft h = 1, \quad s\triangleleft 1 = s
\end{equation}
\begin{equation}\eqlabel{nor2}
f(s, 1) = f(1, s) = 1, \quad s\ast 1 = 1 \ast s = s
\end{equation}
for all $s\in S$ and $h\in H$. Hence, the multiplication $\ast$ on
$S$ has a unit but is not necessary associative. In fact, we can
easily prove that it satisfies the following compatibility:
\begin{equation}\eqlabel{ES7ab}
(s_{1} \ast s_{2}) \ast s_{3} = \bigl( s_{1} \triangleleft
f(s_{2},s_{3}) \bigl ) \ast (s_{2} \ast s_{3})
\end{equation}
for all $s_1$, $s_2$, $s_3 \in S$, i.e. $\ast$ is associative up
to the pair $(\triangleleft, f)$. Moreover, any element $s\in S$ is left
invertible in $(S, \ast)$; more precisely we can show that for any
$s\in S$ there exists a unique element $s' \in S$ such that $s'
\ast s = 1$.
\end{remark}

\subsection*{The abstract construction of the unified product}
Let $H$ be a group and $E$ a set such that $H \subseteq E$.
\thref{splitrec} describes the way any group structure $\cdot$ on
the set $E$ such that $H$ is a subgroup of $(E, \cdot)$ should
look like. We are left to find the abstract axioms that need to be fulfilled by
the system of maps $(\ast, \triangleleft, \triangleright, f)$ such that
\equref{inmultireau} is indeed a group structure. This will be
done below.

\begin{definition}\delabel{hgroup}
Let $H$ be a group. An \textit{extending datum of $H$} is a system
$\Omega(H) = \bigl( (S, 1_S, \ast), \triangleleft, \,
\triangleright, \, f \bigl)$ where:

$(1)$ $(S, 1_S)$ is a pointed set, $\ast: S \times S \to S$ is a
binary operation such that for any $s\in S$
\begin{equation}\eqlabel{ES3}
s\ast 1_S = 1_S \ast s = s
\end{equation}
$(2)$ The maps $\triangleleft : S\times H \to S$, $\triangleright: S\times H \to H$
and $f : S\times S \to H$ satisfy the following normalizing
conditions for any $s\in S$ and $h\in H$:
\begin{equation}\eqlabel{nor1a}
s\triangleleft 1_H = s, \,\, 1_S \triangleleft h = 1_S, \,\, 1_S
\triangleright h = h, \,\, s\triangleright 1_H = 1_H, \,\, f(s,
1_S) = f(1_S, s) = 1_H
\end{equation}
\end{definition}

Let $H$ be a group and $\Omega(H) = \bigl( (S, 1_S, \ast),
\triangleleft, \, \triangleright, \, f \bigl)$ an extending datum
of $H$. We denote by $H \ltimes_{\Omega(H)} S$ := $H \ltimes S$
the set $H\times S$ with the binary operation defined by the
formula:
\begin{equation}\eqlabel{13a}
(h_{1}, s_{1}) \cdot (h_{2}, s_{2}) := \Bigl( \, h_{1} (s_{1}
\triangleright h_{2}) f(s_{1} \triangleleft h_{2}, s_{2}), \,\,
(s_{1} \triangleleft h_{2}) \ast s_{2} \Bigl )
\end{equation}
for all $h_1$, $h_2 \in H$ and $s_1$, $s_2\in S$.

\begin{definition}\delabel{unifprod}
Let $H$ be a group and $\Omega(H) = \bigl( (S, 1_S, \ast),
\triangleleft, \, \triangleright, \, f \bigl)$ an extending datum
of $H$. The object $H \ltimes S$ introduced above is called
\textit{the unified product of $H$ and $\Omega(H)$} if $H \ltimes
S$ is a group with the multiplication given by \equref{13a}. In
this case the extending datum $\Omega(H)$ is called a
\textit{group extending structure} of $H$. The maps $\triangleright$ and
$\triangleleft$ are called the \textit{actions} of $\Omega(H)$ and $f$ is
called the $(\triangleright, \triangleleft)$-\textit{cocycle} of $\Omega(H)$.
\end{definition}

Using \equref{ES3} and \equref{nor1a} it is straightforward to
prove that $(1_H, 1_S)$ is a unit of the multiplication
\equref{13a} and the following relations hold in $H \ltimes S$:
\begin{eqnarray}
(h_1 ,  1_S)\cdot (h_2,  s_2) &{=}& (h_1 h_2,
s_2) \eqlabel{13}\\
(h_1,  s_1) \cdot (1_H,  s_2) &{=}& ( h_1 f(s_{1}, \, s_{2}),
s_{1}\ast s_{2})\eqlabel{14}\\
(h_1,  s_1) \cdot (h_2 , 1_{S}) &{=}& ( h_1 (s_{1} \triangleright
h_{2}) , s_{1} \triangleleft h_{2}) \eqlabel{15}
\end{eqnarray}
for all $h_1$, $h_2 \in H$ and $s_1$, $s_2\in S$. Next, we
indicate the abstract system of axioms that need to be satisfied
by the maps $ (\ast, \triangleleft, \, \triangleright, \, f)$
such that $H \ltimes S$ becomes a unified product.

\begin{theorem}\thlabel{constr}
Let $H$ be a group and $\Omega(H) = \bigl( (S, 1_S, \ast),
\triangleleft, \, \triangleright, \, f \bigl)$ an extending datum
of $H$. The following statements are equivalent:

$(1)$ $A \ltimes H$ is an unified product;

$(2)$ The following compatibilities hold for any $s$, $s_1$,
$s_2$, $s_3 \in S$ and $h$, $h_1$, $h_2 \in H$:

\begin{enumerate}
\item[(ES1)] The map $\triangleleft : S\times H \to S$ is a right action of
the group $H$ on the set $S$;

\item[(ES2)]  $(s_{1} \ast s_{2}) \ast s_{3} = \bigl( s_{1}
\triangleleft f(s_{2},s_{3}) \bigl ) \ast (s_{2} \ast s_{3});$

\item[(ES3)] $s \triangleright(h_{1} h_{2}) = (s \triangleright
h_{1}) \bigl( (s \triangleleft h_{1}) \triangleright h_{2}
\bigl);$

\item[(ES4)] $(s_{1} \ast s_{2}) \triangleleft h = \bigl ( s_{1}
\triangleleft (s_{2} \triangleright h) \bigl ) \ast (s_{2}
\triangleleft h);$

\item[(ES5)] $\bigl( s_{1} \triangleright(s_{2}\triangleright
h)\bigl) f \bigl( s_{1} \triangleleft (s_{2}\triangleright h), \,
s_{2} \triangleleft h \bigl ) =  f(s_{1}, \, s_{2}) \bigl( (s_{1}
\ast s_{2} ) \triangleright h \bigl);$

\item[(ES6)] $f(s_{1}, \, s_{2}) f(s_{1} \ast s_{2}, \, s_{3}) =
\bigl (s_{1} \triangleright f(s_{2}, \, s_{3})\bigl ) f\bigl
(s_{1} \triangleleft f(s_{2}, s_{3}), \, s_{2} \ast s_{3}\bigl );$

\item[(ES7)] For any $s\in S$ there exists $s'\in S$ such that $s'
\ast s = 1_S$.
\end{enumerate}
\end{theorem}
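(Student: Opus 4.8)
The plan is to prove both implications by testing and then enforcing associativity of the multiplication \equref{13a}, using that $(1_H, 1_S)$ is already a two-sided unit by the normalization conditions \equref{ES3} and \equref{nor1a} (as recorded in the preceding Remark). Throughout I will exploit the three cross relations \equref{13}, \equref{14}, \equref{15}, which compute the product as soon as one of the two factors lies in $H \times \{1_S\}$ or in $\{1_H\} \times S$.

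For the implication $(1) \Rightarrow (2)$ I would simply specialize the associativity law $(xy)z = x(yz)$ to three families of triples and read off the axioms one at a time. Evaluating on $\bigl((1_H, s_1), (1_H, s_2), (1_H, s_3)\bigr)$ with the help of \equref{14} yields (ES2) on the $S$-component and (ES6) on the $H$-component. Evaluating on $\bigl((1_H, s), (h_1, 1_S), (h_2, 1_S)\bigr)$ via \equref{15} and \equref{13} gives the multiplicativity $(s \triangleleft h_1)\triangleleft h_2 = s \triangleleft (h_1 h_2)$, which together with $s \triangleleft 1_H = s$ from \equref{nor1a} is exactly (ES1), while its $H$-component is (ES3). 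Evaluating on $\bigl((1_H, s_1), (1_H, s_2), (h, 1_S)\bigr)$ via \equref{14} and \equref{15} produces (ES4) on the $S$-component and (ES5) on the $H$-component. Finally (ES7) is forced by invertibility: a left inverse $(h', s')$ of $(1_H, s)$ satisfies, by \equref{14}, $\bigl(h' f(s', s), s' \ast s\bigr) = (1_H, 1_S)$, whence $s' \ast s = 1_S$.

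For the converse $(2) \Rightarrow (1)$ the unit is already in hand, so it remains to establish associativity and inverses. For associativity I would first note the left pull-out identity $(h, 1_S)\cdot(a, b) = (ha, b)$ from \equref{13}, which shows that a leading factor in $H \times \{1_S\}$ passes freely through any product; writing $(h_1, s_1) = (h_1, 1_S)\cdot(1_H, s_1)$ and using injectivity of left translation in the first coordinate, this lets me reduce the general associativity check to triples whose first entry is $(1_H, s_1)$. I would then expand both $\bigl[(1_H, s_1)(h_2, s_2)\bigr](h_3, s_3)$ and $(1_H, s_1)\bigl[(h_2, s_2)(h_3, s_3)\bigr]$ by \equref{13a} and compare the two coordinates separately: equality of the $S$-components follows from (ES1), (ES2) and (ES4), and equality of the $H$-components follows from (ES3), (ES5) and (ES6) combined with associativity in $H$. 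For inverses, given $(h, s)$ I would set $s'' := s' \triangleleft h^{-1}$, where $s'$ is supplied by (ES7), so that $s'' \triangleleft h = s'$ by the right-action axiom (ES1), and then $h'' := f(s', s)^{-1}\,(s'' \triangleright h)^{-1}$; a direct application of \equref{13a} shows $(h'', s'')\cdot(h, s) = (1_H, 1_S)$. Thus every element has a left inverse, and since we already have an associative operation with a two-sided unit, the standard fact that a monoid in which every element is left invertible is a group finishes the argument.

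The routine but genuinely laborious part -- and the one I expect to be the main obstacle -- is the coordinatewise verification of associativity in the converse, specifically matching the two $H$-components. Unlike the $S$-component, whose comparison is reasonably transparent, the $H$-component carries the cocycle $f$ entangled with both actions, so it must be rewritten by applying (ES3), (ES5) and (ES6) in a carefully chosen order while repeatedly using associativity in $H$; keeping the bookkeeping of the nested arguments $s_i \triangleleft(\cdots)$ and $s_i \triangleright(\cdots)$ straight is where the real work lies.
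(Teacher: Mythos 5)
Your proposal is correct and follows essentially the same route as the paper's proof: the identical three specializations of associativity (to $\bigl((1_H,s),(h_1,1_S),(h_2,1_S)\bigr)$, $\bigl((1_H,s_1),(1_H,s_2),(1_H,s_3)\bigr)$ and $\bigl((1_H,s_1),(1_H,s_2),(h,1_S)\bigr)$) yield (ES1)--(ES6), the converse is the same coordinatewise expansion of the associativity law (your pull-out reduction to $h_1 = 1_H$ is only a cosmetic shortcut, since in the paper's computation $h_1$ merely rides along as a left prefix), and the group part is settled by the same left-inverse criterion for monoids, your explicit left inverse $\bigl(f(s',s)^{-1}(s''\triangleright h)^{-1},\, s''\bigr)$ with $s''=s'\triangleleft h^{-1}$ agreeing, via (ES3), with the paper's formula. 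One caution for the verification you deferred: the $H$-component does not follow from (ES3), (ES5), (ES6) alone --- the paper's chain applies (ES3), (ES1), (ES3), (ES5), (ES6), (ES4) in that order, with (ES1) and (ES4) needed to rewrite the nested first arguments of $f$ --- so your axiom bookkeeping there is slightly off, though harmless since all of (ES1)--(ES6) are hypotheses in that direction.
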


Before going into the proof of the theorem we note that $(ES3)$
and $(ES4)$ are exactly, mutatis-mutandis, the compatibility
conditions \equref{2} and \equref{3} from the definition of a
matched pair of groups while $(ES5)$ and $(ES6)$ are deformations
via the right action $\triangleleft$ of the compatibility conditions
\equref{WA} and \equref{CC} from the definition of a crossed
system of groups. The axiom $(ES1)$ is called the \emph{twisted
associativity condition} as it measures how far $\ast$ is from
being associative, i.e. from being a group structure on $S$.

\begin{proof}
We know that $(1_{H}, 1_{S})$ is a unit for the operation defined by
\equref{13a}. We prove now that $\cdot$ given by
\equref{13a} is associative if and only if the compatibility
conditions $(ES1) - (ES6)$ hold. Assume first that $\cdot$ is
associative and let $h$, $h_{1}$, $h_{2} \in H$ and $s$, $s_1$,
$s_{2} \in S$. The associativity condition
$$
[(1_H, s)\cdot (h_{1}, 1_S)]\cdot (h_2, 1_S) = (1_H, s)\cdot
[(h_{1}, 1_S)\cdot (h_2, 1_S)]
$$
gives, after we use the cross relations \equref{15} and
\equref{13}, $ (s \triangleright h_1, \, s\triangleleft h_1 ) \cdot (h_2, 1_S) =
(1_H, s) \cdot (h_1 h_2, \, 1_S)$. Thus $ \Bigl((s \triangleright
h_{1}) \bigl( (s \triangleleft h_{1}) \triangleright h_{2} \bigl),
\, (s\triangleleft h_1) \triangleleft h_2 \Bigl) = \bigl(s \triangleright (h_1 h_2), \, s \triangleleft
(h_1 h_2) \bigl)$ and hence $(ES1)$ and $(ES3)$ hold. Now, by
writing the associativity condition $[(1_H, s_1)\cdot (1_H,
s_2)]\cdot (1_H, s_3) = (1_H, s_1)\cdot [(1_H, s_2)\cdot (1_H,
s_3)]$ and computing this equality using the cross relation
\equref{14} it follows that the compatibility conditions
$(ES2)$ and $(ES6)$ hold. Finally, if we write the associativity
condition $[(1_H, s_1)\cdot (1_H, s_2)]\cdot (h, 1_S) = (1_H,
s_1)\cdot [(1_H, s_2)\cdot (h, 1_S)]$ and use \equref{14} and then
\equref{15} we obtain that $(ES4)$ and $(ES5)$
hold.

Conversely, assume that the compatibility conditions $(ES1) -
(ES6)$ hold. Then, by a rather long but straightforward
computation, which can be provided upon
request, we can prove that the operation $\cdot$ is associative,
that is $(h_{1}, s_{1})\cdot [(h_{2}, s_{2}) \cdot (h_{3}, s_{3})]
= [(h_{1}, s_{1})\cdot (h_{2}, s_{2})]\cdot (h_{3}, s_{3})$, for
all $h_1$, $h_2$, $h_{3} \in H$ and $s_1$, $s_2$, $s_{3} \in S$.

To conclude, we have proved that $( H \ltimes S , \cdot )$ is a monoid
if and only if $(ES1) - (ES6)$ hold. Assume now that $( H \ltimes
S , \cdot )$ is a monoid: it remains to be proved that the monoid
is actually a group if and only if $(ES7)$ holds. Indeed, in the
monoid $( H \ltimes S , \cdot )$ we have:
$$
(h, 1_S)\cdot (1_H, s) = (h, s), \qquad (h_1, 1_S) \cdot (h_2,
1_S) = (h_1 h_2, 1_S)
$$
for all $h$, $h_1$, $h_2 \in H$ and $s\in S$. In particular, any
element of the form $(h, 1_S)$, for $h\in H$ is invertible in $( H
\ltimes S , \cdot )$. Now, a monoid is a group if and only if each
of his elements has a left inverse. As $\cdot$ is associative it
follows from:
$$
(h^{-1}, 1_S)\cdot (h, s) = (1_H, s)
$$
that $( H \ltimes S , \cdot )$ is a group if and only if $(1_H,
s)$ has a left inverse for all $s\in S$. Hence, for any $s\in S$
there exist elements $s' \in S$ and $h'\in H$ such that
$$
(h' , s' ) \cdot (1_H, s) = \bigl ( h' f (s', s), \, s' \ast
s\bigl ) = (1_H, 1_S)
$$
This is of course equivalent to the fact that $s' \ast s = 1_S$
for all $s \in S$ and $h' = f\bigl( s', \, s \bigl)^{-1}$. The
proof is now finished. We note that the inverse of an element $(h,
s)$ in the group $( H \ltimes S , \cdot )$ is given by the formula
$$
(h, \, s)^{-1} = \bigl( f(s', \, s)^{-1} (s' \triangleright
h^{-1}), \, s' \triangleleft h^{-1}\bigl)
$$
where $s' \ast s = 1_S$.
\end{proof}

\begin{remark}\relabel{cumsefac}
Let $H$ be a group, $\Omega(H) = \bigl( (S, 1_S, \ast),
\triangleleft, \, \triangleright, \, f \bigl)$ a group extending
datum of $H$ and $H \ltimes_{\Omega} S$ the associated unified
product. Then the canonical inclusion
$$
i_H : H \to H \ltimes_{\Omega} S, \qquad i_H (h) := (h, 1_S)
$$
is a morphism of groups and the map
$$
p_H : H \ltimes_{\Omega} S \to H, \qquad p_H (h, s) := h
$$
satisfies condition \equref{33ab} of \thref{splitrec}.
Moreover, if we identify $H \cong (H, 1_S) \leq H \ltimes_{\Omega}
S$ and $S \cong (1_H, S) \subset H \ltimes_{\Omega} S$ we can
easily show that the maps $\ast$, $\triangleleft$, $\triangleright$ and $f$ from the
definition of $\Omega(H)$ are exactly the ones given in (3) of
\thref{splitrec} associated to the splitting map $p_H$.
\end{remark}

We record these observations in
the following result which gives the answer to the description
part of the ES-problem.

\begin{corollary}\colabel{primaint}
Let $H$ be a group and $E$ a set such that $H \subseteq E$. Then
there exists a group structure $\cdot$ on $E$ such that $H$ is a
subgroup of $(E, \cdot)$ if and only if there exists a group
extending structure $\Omega(H) = \bigl( (S, 1_S, \ast),
\triangleleft, \, \triangleright, \, f \bigl)$ of $H$ such that $
H \ltimes S \cong (E, \cdot)$.
\end{corollary}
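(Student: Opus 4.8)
The plan is to assemble the correspondence from \thref{splitrec}, \thref{constr} and \reref{cumsefac}: the first two supply the two maps between (i) and (ii), and the last is what makes the round trips close up. First I would define the map $(i) \to (ii)$. Given a group structure $\cdot$ on $E$ with $H \leq (E, \cdot)$, I fix a retraction $p : E \to H$ as in part (1) of \thref{splitrec}, set $S := p^{-1}(1_H)$ with distinguished point $1_S := 1_H$, and let $\Omega(H) = \bigl( (S, 1_S, \ast), \lhd, \rhd, f \bigr)$ be the extending datum built from $p$ in part (3). By part (2) the map $\varphi : H \times S \to E$, $\varphi(h,s) := hs$, is a bijection, whence $|S||H| = |E|$; and by part (3) it is even an isomorphism $H \ltimes S \to (E, \cdot)$ of groups. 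Thus $H \ltimes S$ is a group, so \thref{constr} forces $(ES1)$--$(ES7)$ and $\Omega(H)$ is a genuine group extending structure, i.e.\ an element of (ii).

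For the map $(ii) \to (i)$, I would start from a group extending structure $\Omega(H)$ on $(S, 1_S)$ with $|S||H| = |E|$. By \deref{unifprod} the unified product $H \ltimes S$ is a group on the set $H \times S$, which has cardinality $|E|$ and contains the copy $(H, 1_S) \cong H$ as a subgroup through $i_H$ (\reref{cumsefac}). Since $|H \times S| = |E|$ and the subset $(H, 1_S)$ matches $H \subseteq E$ in cardinality, I would choose a bijection $\psi : H \ltimes S \to E$ with $\psi(h, 1_S) = h$ for all $h \in H$ and transport the multiplication of $H \ltimes S$ along $\psi$; the result is a group structure on $E$ for which $\psi$ is an isomorphism restricting to $\mathrm{Id}_H$ on $H$, so $H$ is a subgroup and we land in (i).

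It then remains to verify the two composites are identities. For $(i) \to (ii) \to (i)$ the natural transport map is $\varphi$ itself: because $\varphi(h, 1_S) = h$ and $\varphi|_S$ is the inclusion $S = p^{-1}(1_H) \hookrightarrow E$, transporting $H \ltimes S$ back along $\varphi$ returns $(E, \cdot)$ unchanged. For $(ii) \to (i) \to (ii)$ I would run the return trip with the retraction $p := p_H \circ \psi^{-1}$, where $p_H : H \ltimes S \to H$ is the canonical projection; \reref{cumsefac} says exactly that $p_H$ satisfies \equref{33ab} and that the structure maps reconstructed from it coincide with $\ast, \lhd, \rhd, f$, so $\Omega(H)$ is recovered up to the pointed relabelling $\psi|_S$ of its underlying set.

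The hard part is the bookkeeping of these choices rather than any computation: the retraction $p$, hence the fibre $S = p^{-1}(1_H)$, is selected by the axiom of choice and is not canonical, and the transport bijection $\psi$ is a further choice. The correspondence is therefore clean precisely when one reads (ii) as extending structures carried on the concrete fibre $S = p^{-1}(1_H) \subseteq E$, or equivalently identifies extending structures that differ by a pointed bijection of the underlying set $S$. Granting this normalization, \reref{cumsefac} is the decisive ingredient: it ensures that reconstructing $\Omega(H)$ from the abstractly built unified product $H \ltimes S$ returns the very same data $(\ast, \lhd, \rhd, f)$, which closes the loop and establishes the bijection.
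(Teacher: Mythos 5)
Your proof is correct and takes essentially the same route as the paper, whose entire proof is the one-line citation of \reref{cumsefac}, \thref{splitrec} and \thref{constr} --- exactly the three ingredients you assemble (splitrec for $(i)\Rightarrow(ii)$, constr to certify the datum and build the group, cumsefac to close the loop). If anything, your explicit handling of the non-canonical choices (the retraction $p$ and the transport bijection $\psi$) and of how to normalize the reading of $(ii)$ is more careful than the paper's own treatment.
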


\begin{proof}
It follows from \reref{cumsefac}, \thref{splitrec} and
\thref{constr}.
\end{proof}

In what follows we provide some special cases of unified products.
First of all we show that the unified product unifies both the
crossed product as well as the bicrossed product of groups.

\begin{example}\exlabel{cazpar}
Let $\Omega(H) = \bigl( (S, 1_S, \ast), \triangleleft, \,
\triangleright, \, f \bigl)$ be an extending datum of $H$ such
that $\triangleleft$ is the trivial action, that is $s \triangleleft h := s$, for
all $s \in S$ and $h\in H$. Then $\Omega (H) $ is a group
extending structure of $H$ if and only if $(S, \ast)$ is a group
structure on the set $S$ and $(H, (S, \ast), \triangleright, f)$ is a
crossed system of groups. In this case, the associated unified
product $H\ltimes_{\Omega} S = H \#_{\triangleright}^{f} \, G $ is the
crossed product of groups.
\end{example}

\begin{example}\exlabel{cazparb}
Let $\Omega(H) = \bigl( (S, 1_S, \ast), \triangleleft, \,
\triangleright, \, f \bigl)$ be an extending datum of $H$ such
that $f$ is the trivial cocycle, that is $f(s_1, s_2) = 1$, for
all $s_1$, $s_2\in S$. Then $\Omega (H) $ is a group extending
structure of $H$ if and only if $(S, \ast)$ is a group structure
on the set $S$ and $(H, (S, \ast), \triangleright, \triangleleft)$ is a matched pair
of groups. In this case, the associated unified product
$A\ltimes_{\Omega} H = H \bowtie \, G $ is the bicrossed product
of groups.
\end{example}

\begin{example}
There exist examples of groups that cannot be written either as a
crossed product or as a bicrossed product of two groups of smaller
order. Such a group should be a simple group (otherwise it can be
written as a crossed product). The simple group of smallest order
that cannot be written as a bicrossed product is the alternating
group $A_6$ (\cite{WW}). The above results allows us to write
$A_6$, and in fact any other simple group which is not a bicrossed
product, as a unified product between one of its subgroups and an
extending structure. For instance, we can write
$$
A_6 \cong A_4 \ltimes_{\Omega} S
$$
for an extending structure $\Omega (A_4) = \bigl( (S, 1_S, \ast),
\triangleleft, \, \triangleright, \, f \bigl)$, where $S$ is a set
with 30 elements.
\end{example}

Finally, an example of a group extending structure was constructed
in \cite{am2} as follows:

\begin{example}
Let $H$ be a group, $(S, 1_S, \ast)$ a pointed set with a binary
operation $ \ast : S \times S \to S$ having $1_S$ as a unit. Let $
\triangleleft : S \times H \to S$ be a map such that $(S, \triangleleft)$ is a right
$H$-set and $\gamma : S \to H$ be a map with $\gamma (1_S) = 1_H$
such that the following compatibilities hold
\begin{eqnarray*}
(x \ast y) \ast z &=& \Bigl ( x \triangleleft \bigl( \gamma(y) \, \gamma
(z) \, \gamma (y\ast z) ^{-1} \bigl) \Bigl ) \, \ast \,  (y \ast
z) \eqlabel{be1}\\
(x \ast y) \triangleleft  g &=& \Bigl( x \triangleleft \bigl( \gamma (y) \, g \,
\gamma ( y \triangleleft g )^{-1} \bigl) \Bigl) \, \ast \, (y \triangleleft g)
\eqlabel{be3}
\end{eqnarray*}
for all $g\in H$, $x$, $y$, $z\in S$. Using the transition map
$\gamma$ we define a left action $\triangleright$ and a cocycle $f$ via:
$$
x \triangleright g := \gamma (x) \, g \,\gamma (x \triangleleft g) ^{-1}, \quad f (x,
\, y) := \gamma (x) \, \gamma (y) \, \gamma (x \ast y)^{-1}
$$
for all $x$, $y\in S$ and $g\in H$. Then we can prove that $\Omega
(H) = \bigl( (S, 1_S, \ast), \triangleleft, \, \triangleright, \,
f \bigl)$ is a group extending structure of $H$.
\end{example}

\subsection*{The universal properties of the unified product}
In this subsection we prove the universality of the unified
product. Let $H$ be a group and $\Omega (H) = \bigl( (S, 1_S,
\ast), \triangleleft, \, \triangleright, \, f \bigl)$ a group
extending structure of $H$. We associate to $\Omega (H) $ two
categories ${}_{\Omega (H)}{\mathcal C}$ and ${\mathcal D}_{\Omega
(H)}$ such that the unified product becomes an initial object in
the first category and a final object in the second category.
Define the category ${}_{\Omega (H)}{\mathcal C}$ as follows: the
objects of ${}_{\Omega (H)}{\mathcal C}$ are pairs $(G, (u, v))$,
where $G$ is a group, $u : H\rightarrow G$ is a morphism of groups
and $v : S\rightarrow G$ is a map such that:
\begin{eqnarray}
v(s_1) v(s_2) &=& u(f(s_1, s_2)) v(s_1\ast s_2)\eqlabel{2.31}\\
v(s) u(h)&=& u(s\triangleright h)v(s\triangleleft h)\eqlabel{2.32}
\end{eqnarray}
for all $s$, $s_1$,  $s_2 \in S$ and $h \in H$. The morphisms of
the category $f:(G_1, (u_1, v_1))\rightarrow (G_2, (u_2, v_2))$
are morphisms of groups $f : G_1\rightarrow G_2$ such that :
$f\circ u_1 = u_2$ and $f\circ v_1=v_2$.

Define the category ${\mathcal D}_{\Omega (H)}$ as follows: the
objects of ${\mathcal D}_{\Omega (H)}$ are pairs $(G, (u,v) )$,
where $G$ is a group, $u : G\rightarrow H$, $v : G\rightarrow S$
are maps such that:
\begin{eqnarray}
u(xy) &=& u(x)[v(x)\triangleright u(y)]f(v(x)\triangleleft u(y),
v(y))\eqlabel{2.33}\\
v(xy) &=& [v(x)\triangleleft u(y)]\ast v(y)\eqlabel{2.34}
\end{eqnarray}
for all $x, y \in G$ while the morphisms of this category $f :
(G_1, (u_1, v_1)) \rightarrow (G_2, (u_2, v_2))$ are morphisms of
groups $f : G_1\rightarrow G_2$ such that: $u_2\circ f = u_1$ and
$v_2\circ f=v_1$.

\begin{theorem}\thlabel{univers}
Let $H$ be a group and $\Omega (H) = \bigl( (S, 1_S, \ast),
\triangleleft, \, \triangleright, \, f \bigl)$ a group extending
structure of $H$. Then:

\begin{enumerate}
\item[(1)] $(H \ltimes_{\Omega} S, (i_H, i_S))$ is an initial
object of ${}_{\Omega (H)}{\mathcal C}$, where $i_H : H\rightarrow
H \ltimes_{\Omega} S$ and $i_S:S\rightarrow H \ltimes_{\Omega} S$
are the canonical inclusions;

\item[(2)] $(H \ltimes_{\Omega} S, (\pi_H, \pi_S))$ is a final
object of ${\mathcal D}_{\Omega (H)}$, where $\pi_H : H
\ltimes_{\Omega} S\rightarrow H $ and $\pi_S :H \ltimes_{\Omega}
S\rightarrow S$ are the canonical projections.
\end{enumerate}
\end{theorem}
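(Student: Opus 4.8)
The plan is to handle the two parts in parallel: in each case I first check that the proposed pair is genuinely an object of the relevant category, and then verify the universal property by producing the unique comparison morphism, whose formula is forced and whose only nontrivial feature is being a group homomorphism.

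For part (1), I would first confirm that $(H \ltimes_{\Omega} S, (i_H, i_S))$ is an object of ${}_{\Omega (H)}{\mathcal C}$. By \reref{cumsefac} the inclusion $i_H$ is a morphism of groups, so it remains to verify \equref{2.31} and \equref{2.32} for $(i_H, i_S)$. Both reduce immediately to the cross relations: applying \equref{14} to $(1_H, s_1) \cdot (1_H, s_2)$ yields \equref{2.31}, while \equref{15} followed by \equref{13} applied to $(1_H, s) \cdot (h, 1_S)$ yields \equref{2.32}. Given now an arbitrary object $(G, (u, v))$, the key observation is the factorization $(h, s) = (h, 1_S) \cdot (1_H, s) = i_H(h) \cdot i_S(s)$, read off from \equref{13}. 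Hence any morphism $\gamma$ with $\gamma \circ i_H = u$ and $\gamma \circ i_S = v$ must satisfy $\gamma(h, s) = u(h) v(s)$, which gives uniqueness. For existence I would set $\gamma(h, s) := u(h) v(s)$ and check it is multiplicative: expanding $\gamma$ of the product \equref{13a}, using that $u$ is a homomorphism, then applying \equref{2.31} (to absorb the cocycle $f$) followed by \equref{2.32} (to commute $u$ and $v$ past one another) collapses the expression to $u(h_1) v(s_1) u(h_2) v(s_2)$, as needed. Finally, taking $s_1 = s_2 = 1_S$ in \equref{2.31} together with the normalizations forces $v(1_S) = 1_G$, which confirms $\gamma \circ i_H = u$ and $\gamma \circ i_S = v$.

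For part (2), I would likewise first check that $(H \ltimes_{\Omega} S, (\pi_H, \pi_S))$ is an object of ${\mathcal D}_{\Omega (H)}$. Writing $x = (h_1, s_1)$, $y = (h_2, s_2)$ and reading off the two components of the product \equref{13a}, the conditions \equref{2.33} and \equref{2.34} become verbatim transcriptions of the first and second coordinate of the multiplication, so they hold automatically. For the final-object property, given any $(G, (u, v))$, the requirements $\pi_H \circ \delta = u$ and $\pi_S \circ \delta = v$ force $\delta(g) = (u(g), v(g))$, which is uniqueness. Existence reduces to checking that this $\delta$ is a homomorphism, and here \equref{2.33} and \equref{2.34} are exactly the two coordinates of $(u(g_1), v(g_1)) \cdot (u(g_2), v(g_2))$ computed via \equref{13a}; thus $\delta(g_1 g_2) = \delta(g_1) \delta(g_2)$ holds on the nose, and preservation of the unit is then automatic since a group homomorphism sends $1_G$ to $(1_H, 1_S)$.

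The only genuinely computational point is the homomorphism check for $\gamma$ in part (1): one must apply \equref{2.31} and \equref{2.32} in the correct order to reduce $u\bigl(h_1 (s_1 \triangleright h_2) f(s_1 \triangleleft h_2, s_2)\bigr)\, v\bigl((s_1 \triangleleft h_2) \ast s_2\bigr)$ to $u(h_1) v(s_1) u(h_2) v(s_2)$, and I expect the main (though still routine) bookkeeping obstacle to lie there. Part (2) is essentially immediate, since \equref{2.33}--\equref{2.34} were evidently designed to encode the product \equref{13a} coordinatewise, and consequently none of the deformation identities among $\ast$, $\triangleleft$, $\triangleright$, $f$ from \thref{constr} need to be invoked.
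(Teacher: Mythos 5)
Your proposal is correct and takes essentially the same route as the paper's proof: the comparison maps $\gamma(h,s)=u(h)v(s)$ and $\delta(g)=(u(g),v(g))$ are forced by the factorization $(h,s)=(h,1_S)\cdot(1_H,s)$, respectively by the projections, and multiplicativity is verified using \equref{2.31}--\equref{2.32} (in that order) for part (1) and \equref{2.33}--\equref{2.34} for part (2), exactly as in the paper. The only difference is that you spell out details the paper dismisses as ``obvious'' or ``easy to see'' (objecthood of the two pairs via the cross relations, and the fact that $v(1_S)=1_G$ follows from \equref{2.31}, which is genuinely needed for $\gamma\circ i_H=u$), and these additions are correct.
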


\begin{proof}
$(1)$ It is easy to see that $(H \ltimes_{\Omega} S, (i_H, i_S))$
is an object in the category ${}_{\Omega (H)}{\mathcal C}$. Let
$(G, (u,v))$ be an object in ${}_{\Omega (H)}{\mathcal C}$. We
need to prove that there exists an unique morphism of groups $\psi
: H \ltimes_{\Omega} S \rightarrow G$ such that the following
diagram commutes:
$$
\xymatrix{ H\ar[r]^{i_{H}}\ar[dr]_{u} & H \ltimes_{\Omega}
S\ar[d]^{\psi} & S\ar[l]_{i_{S}}\ar[dl]^{v} \\ &G }
$$
Assume first that $\psi$ satisfies the above condition. We obtain:
$\psi ((h,s)) = \psi((h, 1_S) \cdot(1_H, s)) = \psi ((h, 1_S))
\psi((1_H, s)) = (\psi \circ i_H)(h) (\psi \circ i_S)(s)) =
u(h)v(s)$, for all $h \in H$, $s \in S$ and we proved that $\psi$
is uniquely determined by $u$ and $v$. The existence of $\psi$ can
be proved as follows: we define $\psi: H\ltimes_{\Omega} S
\rightarrow G$ by $\psi ((h, s)) := u(h)v(s)$, for all $h\in H$
and $s\in S$. The fact that $\psi$ is a morphism of groups is just a straightforward
computation and the commutativity of the diagram is obvious.

$(2)$ Similar to $(1)$.
\end{proof}

\subsection*{The classification of unified products}
In this subsection we provide the classification part of the
ES-problem. Using \coref{primaint}, the classification of all
group structures on $E$ that contain $H$ as a subgroup, reduces
to the classification of all unified products $H \ltimes_{\Omega}
S$, associated to all group extending structures $\Omega (H) =
\bigl( (S, 1_S, \ast), \triangleleft, \, \triangleright, \, f
\bigl)$, for a set $S$ such that $|H| |S| = |E|$.

From now on the group $H$ and the pointed set $(S, 1_S)$ will be
fixed. Let ${\mathcal G} {\mathcal E} {\mathcal S}$$(H, (S, 1_S))$
be the set of all quadruples $(\ast, \triangleleft, \,
\triangleright, \, f)$ such that $\bigl( (S, 1_S, \ast),
\triangleleft, \, \triangleright, \, f \bigl)$ is a group
extending structure of $H$.

\begin{definition}\delabel{diagdef}
Let $\Omega (H) = \bigl( (S, 1_S, \ast), \triangleleft, \,
\triangleright, \, f \bigl)$ and $\Omega ' (H) = \bigl( (S, 1_{S},
\ast'), \triangleleft', \, \triangleright', \, f' \bigl)$ be two
group extending structures of $H$ and $H \ltimes_{\Omega} S$, $H
\ltimes_{\Omega'} S$ the associated unified products. For a
morphism of groups $\psi : H \ltimes_{\Omega} S \to H
\ltimes_{\Omega'} S$ we consider the following diagram
\begin{equation}\eqlabel{D2}
\xymatrix {& H \ar[r]^{i_{H}} \ar[d]_{Id_{H}} & {H\ltimes S}
\ar[r]^{\pi_{S}}\ar[d]^{\psi} & S\ar[d]^{Id_{S}}\\
& H\ar[r]^{i_{H}} & {H\ltimes' S}\ar[r]^{\pi_{S}} & S}
\end{equation}
where  $\pi_S : H \ltimes_{\Omega} S \to S$ is the canonical
projection $\pi (h, s) := s$, for all $h\in H$ and $s\in S$. We
say that $\psi : H \ltimes_{\Omega} S \to H \ltimes_{\Omega'} S$
\emph{stabilizes} $H$ (resp. stabilizes $S$) if the left square
(resp. the right square) of the diagram \equref{D2} is
commutative.
\end{definition}

\begin{proposition}\prlabel{clasif1}
Let $H$ be a group, $(S, 1_S)$, $(S', 1_{S'})$ two pointed sets
and $\Omega (H) = \bigl( (S, 1_S, \ast), \triangleleft, \,
\triangleright, \, f \bigl)$ and $\Omega ' (H) = \bigl( (S',
1_{S'}, \ast'), \triangleleft', \, \triangleright', \, f' \bigl)$
be two group extending structures of $H$. Then there exists a
bijective correspondence between the set of all morphisms of
groups $\psi : H \ltimes_{\Omega} S \to H \ltimes_{\Omega'} S'$
that stabilize $H$ and the set of all pairs $(r, v)$, where $r: S
\rightarrow H$, $v: S \rightarrow S'$ are two unitary maps such
that:
\begin{eqnarray}
v(s \triangleleft h) &=& v(s) \triangleleft' h \eqlabel{p1} \\
(s \triangleright h) \, r(s \triangleleft h) &=& r(s)\, \bigl(v(s) \triangleright'
h \bigl)\eqlabel{p2} \\
v(s_{1} \ast s_{2}) &=& \bigl( v(s_{1}) \triangleleft' r(s_{2})\bigl) \,
\ast' \, v(s_{2}) \eqlabel{p3}\\
f(s_1, s_2) \, r(s_{1} * s_{2}) &=& r(s_{1}) \, \bigl( v(s_{1})
\triangleright' r(s_{2}) \bigl) \, f' \bigl( v(s_{1}) \triangleleft' r(s_{2}), \,
v(s_2) \bigl) \eqlabel{p4}
\end{eqnarray}
for all $s$, $s_{1}$, $s_{2} \in S$ and $h \in H$. Through the
above correspondence the morphism $\psi : H \ltimes_{\Omega} S \to
H \ltimes_{\Omega'} S'$ corresponding to $(r, v)$ is given by
\begin{equation}\eqlabel{p5}
\psi(h, \, s) = \bigl(h \, r(s), \, v(s)\bigl)
\end{equation}
for all $h \in H$, $s \in S$. Furthermore, $\psi : H
\ltimes_{\Omega} S \to H \ltimes_{\Omega'} S'$ given by
\equref{p5} is an isomorphisms of groups if and only if $v: S \to
S'$ is bijective.
\end{proposition}

\begin{proof}
A morphism of groups $\psi : H \ltimes_{\Omega} S \to H
\ltimes_{\Omega'} S'$ that makes the left square of the diagram
\equref{D2} commutative is uniquely determined by two maps $r =
r_{\psi}: S \rightarrow H$, $v = v_{\psi}: S \rightarrow S'$ such
that $\psi(1, s) = \bigl( r(s), v(s)\bigl)$ for all $s \in S$. In
this case $\psi$ is given by :
$$
\psi(h, s) = \psi\bigl((h, 1_S)\cdot (1_H, s)\bigl) = (h, 1_S)
\cdot \bigl(r(s), v(s)\bigl)  = \bigl(h r(s), v(s)\bigl)
$$
for all $h \in H$ and $s \in S$. Now, $\psi(1_H, 1_S) = (1_H, 1_{S'})$ if and only if $r$
and $v$ are unitary maps. Assuming this unitary condition, we can
easily prove that $\psi$ is a morphism of groups if and only if
the compatibility conditions \equref{p1} - \equref{p4} hold for
the pair $(r, v)$. It remains to be proved that $\psi$ given by
\equref{p5} is an isomorphism if and only if $v: S\to S'$ is a
bijective map. Assume first that $\psi$ is an isomorphism. Then
$v$ is surjective and for $s_{1}$, $s_{2} \in S$ such that
$v(s_{1}) = v(s_{2})$ we have:
$$
\psi(1_H, s_{2}) = \bigl(r(s_{2}), v(s_{2})\bigl) =
\bigl(r(s_{2}), v(s_{1})\bigl) = \psi\bigl( r(s_{2})
r(s_{1})^{-1}, \, s_{1}\bigl)
$$
Hence $s_{1} = s_{2}$ and $v$ is injective. Conversely is
straightforward.
\end{proof}

\begin{definition}\delabel{equiv1} Two elements
$(\ast, \triangleleft, \, \triangleright, \, f)$ and $(\ast',
\triangleleft', \, \triangleright', \, f')$ of ${\mathcal G}
{\mathcal E} {\mathcal S}$$(H, (S, 1_S))$ are called
\emph{equivalent} and we denote this by $(\ast, \triangleleft, \,
\triangleright, \, f) \sim (\ast', \triangleleft', \,
\triangleright', \, f')$ if there exists a pair $(r, v)$ of
unitary maps  $r: S \rightarrow H$, $v: S \rightarrow S$ such that
$v$ is a bijection on the set $S$ and the compatibility conditions
\equref{p1} - \equref{p4} are fulfilled.
\end{definition}

It follows from \prref{clasif1} that $(\ast, \triangleleft, \,
\triangleright, \, f) \sim (\ast', \triangleleft', \,
\triangleright', \, f')$ if and only if there exists $\psi : H
\ltimes_{\Omega} S \to H \ltimes_{\Omega'} S$ an isomorphism of
groups that stabilizes $H$. Thus, $\sim$ is an equivalence
relation on the set ${\mathcal G} {\mathcal E} {\mathcal S}$$(H,
(S, 1_S))$. We denote by ${\mathcal K}^{2}_{\ltimes} (H, (S,
1_S))$ the quotient set ${\mathcal G} {\mathcal E} {\mathcal
S}$$(H, (S, 1_S)) /\sim$.

Let $\Cc (H, (S, 1_S) )$ be the category whose class of objects is
the set ${\mathcal G} {\mathcal E} {\mathcal S}$$(H, (S, 1_S))$. A
morphism $\psi : (\ast, \triangleleft, \, \triangleright, \, f)
\to (\ast', \triangleleft', \, \triangleright', \, f')$ in $\Cc
(H, (S, 1_S) )$ is a morphism of groups $\psi : H \ltimes_{\Omega}
S \to H \ltimes_{\Omega'} S$ that stabilizes $H$.

The main result of this paper which
gives the full answer to the ES-problem now follows as a
direct application of \prref{clasif1}: the classifying object for
the ES-problem is ${\mathcal K}^{2}_{\ltimes} (H, (S, 1_S))$
constructed above.

\begin{theorem} \thlabel{clasif1a}
Let $H$ be a group and $(S, 1_S)$ a pointed set. Then there exists
a bijection between the set of objects of the skeleton of the
category $\Cc (H, (S, 1_S) )$ and ${\mathcal K}^{2}_{\ltimes} (H,
(S, 1_S))$.
\end{theorem}

\begin{remark}\relabel{pana}
At the level of algebras the bicrossed product is known under the
name of \emph{twisted tensor product algebra}. \cite[Theorem
4.4]{panaite} provides sufficient conditions for two bicrossed
products of algebras $A\bowtie B$ and $A ' \bowtie B$ to be
isomorphic such that the isomorphism stabilizes $B$. The result in
\cite[Theorem 4.4]{panaite} can be improved and generalized in the
spirit of \prref{clasif1}.
\end{remark}

Using \prref{clasif1} we can also prove a general Schreier
classification theorem for unified products.

\begin{proposition}\prlabel{clasif2}
Let $\Omega (H) = \bigl( (S, 1_S, \ast), \triangleleft, \,
\triangleright, \, f \bigl)$, $\Omega ' (H) = \bigl( (S, 1_{S},
\ast'), \triangleleft', \, \triangleright', \, f' \bigl)$ be two
group extending structures of a group $H$. Then there exists a
morphism $\psi : H \ltimes_{\Omega} S \to H \ltimes_{\Omega'} S$
that stabilizes $H$ and $S$ if and only if $\triangleleft = \triangleleft'$ and there
exists a unitary map $r: S \rightarrow H$ such that $\triangleright$, $\ast$
and $f$ are implemented by $\triangleright '$, $\ast'$ and $f'$ via $r$ as
follows:
\begin{eqnarray}
s \triangleright h &=& r(s)\, \bigl(s \triangleright'
h \bigl) \, r(s \triangleleft h)^{-1} \eqlabel{p2b} \\
s_{1} \ast s_{2} &=& \bigl( s_{1} \triangleleft r(s_{2})\bigl) \,
\ast' \, s_{2} \eqlabel{p3b}\\
f(s_1, s_2) &=& r(s_{1}) \, \bigl( s_{1} \triangleright' r(s_{2}) \bigl) \,
f' ( s_{1} \triangleleft r(s_{2}), \, s_2 ) \, r(s_{1} * s_{2})^{-1}
\eqlabel{p4b}
\end{eqnarray}
for all $s$, $s_{1}$, $s_{2} \in S$ and $h \in H$. Furthermore,
any morphism of groups $\psi : H \ltimes S \to H \ltimes' S$ that
stabilizes $H$ and $S$ is an isomorphism of groups and is given by
\begin{equation}\eqlabel{p5b}
\psi(h, \, s) = \bigl(h \, r(s), \, s\bigl)
\end{equation}
for all $h \in H$, $s \in S$.
\end{proposition}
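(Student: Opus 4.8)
The plan is to deduce this Proposition as a direct specialization of \thref{clasif1} to the case where the target pointed set coincides with the source, i.e. $S' = S$, together with the extra constraint imposed by the right-hand square of diagram \equref{D2}. The key observation is that the left-hand square of \equref{D2} is exactly the commutativity required in diagram \equref{D1} (with $S' = S$), so \thref{clasif1} applies verbatim to that square.

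First I would record what \thref{clasif1} gives: any morphism $\psi$ making the left square of \equref{D2} commute is of the form \equref{p5}, namely $\psi(h, s) = (h\, r(s), v(s))$ for a pair $(r, v)$ of unitary maps $r : S \to H$, $v : S \to S$ satisfying \equref{p1}--\equref{p4}, and conversely every such pair produces such a morphism. Next I would analyze the right-hand square. Since $\pi_S \circ \psi$ sends $(h, s)$ to $v(s)$ while ${\rm Id}_S \circ \pi_S$ sends $(h, s)$ to $s$, commutativity of the right square is equivalent to the single condition $v = {\rm Id}_S$. Thus the pairs $(r, v)$ that make the \emph{whole} diagram \equref{D2} commute are precisely those with $v = {\rm Id}_S$, and these are in bijection with unitary maps $r : S \to H$.

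It then remains to see that substituting $v = {\rm Id}_S$ into \equref{p1}--\equref{p4} reproduces exactly the conditions in part $(2)$. This is a purely mechanical rearrangement, which I would carry out as follows: equation \equref{p1} collapses to $s \lhd h = s \lhd' h$, i.e. $\lhd = \lhd'$; equation \equref{p2} rearranges (solving for $s \rhd h$) to \equref{p2b}; and equations \equref{p3} and \equref{p4}, after invoking $\lhd = \lhd'$, rearrange to \equref{p3b} and \equref{p4b} respectively. I expect no subtlety here beyond keeping track of which terms move to the other side; the genuine content has already been supplied by \thref{clasif1}.

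Finally, the ``furthermore'' assertion is immediate from the last clause of \thref{clasif1}: $\psi$ is an isomorphism precisely when $v$ is bijective, and here $v = {\rm Id}_S$ is trivially a bijection, so \emph{every} morphism $\psi$ making \equref{D2} commute is automatically an isomorphism; its explicit form \equref{p5b} is just \equref{p5} evaluated at $v = {\rm Id}_S$. There is, in truth, no serious obstacle in this argument, since it is essentially a corollary of \thref{clasif1}; the only point meriting a moment's care is the identification of the right square of \equref{D2} with the condition $v = {\rm Id}_S$, which is clear because $v$ is recovered from $\psi$ as $v = \pi_S \circ \psi \circ i_S$.
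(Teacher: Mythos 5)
Your proposal is correct and follows essentially the same route as the paper's own proof: both apply \thref{clasif1} to the left square of \equref{D2}, identify commutativity of the right square with $v = {\rm Id}_S$, and then read off $\lhd = \lhd'$ and \equref{p2b}--\equref{p4b} as the specializations of \equref{p1}--\equref{p4}, with the ``furthermore'' clause following because ${\rm Id}_S$ is bijective. Your write-up is in fact slightly more explicit than the paper's (e.g.\ in recovering $v$ as $\pi_S \circ \psi \circ i_S$ and in carrying out the algebraic rearrangements), but there is no substantive difference in method.
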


\begin{proof}
Indeed, using \prref{clasif1} any morphism of groups $\psi : H
\ltimes S \to H \ltimes 'S$ that makes the left square of
\equref{D2} commutative is given by \equref{p5} for some unique
maps $(u, v)$. Now, such a morphism $\psi = \psi_{u, v}$ makes the
right square of \equref{D2} commutative if and only if $v$ is the
identity map on $S$. Now the proof follows from \prref{clasif1}:
\equref{p1} implies that the right actions $\triangleleft$ and $\triangleleft'$
should be equal while $\equref{p2b}- \equref{p4b}$ are exactly
$\equref{p2}- \equref{p4}$ for $v = {\rm Id}_S$ and $\triangleleft =
\triangleleft'$.
\end{proof}

\prref{clasif2} tells us that in order to obtain a Schreier type
theorem for unifed products we have to set the group $H$, the
pointed set $(S, 1_S)$ and a right $H$-action $\triangleleft$ of the group
$H$ on the set $S$. Let ${\mathcal S} {\mathcal E} {\mathcal
S}$$(H, (S, 1_S), \triangleleft)$ be the set of all triples $(\ast,
\triangleright, \, f)$ such that $\bigl( (S, 1_S, \ast),
\triangleleft, \, \triangleright, \, f \bigl)$ is a group
extending structure of $H$.

\begin{definition}\delabel{equiv111}
Let $H$ be a group, $(S, 1_S)$ a pointed set and $\triangleleft : S \times
H \to S$ a right action of $H$ on $S$. Two elements $(\ast, \,
\triangleright, \, f)$ and $(\ast', \, \triangleright', \, f')$ of
${\mathcal S} {\mathcal E} {\mathcal S}$$(H, (S, 1_S), \triangleleft)$ are
called \emph{cohomologous} and we denote this by $(\ast, \,
\triangleright, \, f) \approx (\ast', \, \triangleright', \, f')$
if there exists a unitary map  $r: S \rightarrow H$ such that
\begin{eqnarray*}
s_{1} \ast s_{2} &=& \bigl( s_{1} \triangleleft r(s_{2})\bigl) \,
\ast' \, s_{2} \eqlabel{p3b1}\\
s \triangleright h &=& r(s)\, \bigl(s \triangleright'
h \bigl) \, r(s \triangleleft h)^{-1} \eqlabel{p2b1} \\
f(s_1, s_2) &=& r(s_{1}) \, \bigl( s_{1} \triangleright' r(s_{2}) \bigl) \,
f' ( s_{1} \triangleleft r(s_{2}), \, s_2 ) \, r(s_{1} * s_{2})^{-1}
\eqlabel{p4b1}
\end{eqnarray*}
for all $s$, $s_{1}$, $s_{2} \in S$ and $h \in H$.
\end{definition}

It follows from \prref{clasif2} that $(\ast, \, \triangleright, \,
f) \approx (\ast', \, \triangleright', \, f')$ if and only if
there exists $\psi : H \ltimes_{\Omega} S \to H \ltimes_{\Omega'}
S$ a morphism of groups such that diagram \equref{D2} is
commutative and moreover such a morphism is an isomorphism. Thus,
$\approx$ is an equivalence relation on the set ${\mathcal S}
{\mathcal E} {\mathcal S}$$(H, (S, 1_S), \triangleleft)$. We denote by
${\mathcal H}^{2}_{\ltimes} (H, (S, 1_S), \triangleleft)$ the quotient set
${\mathcal S} {\mathcal E} {\mathcal S}$$(H, (S, 1_S), \triangleleft)
/\approx$.

Let $\Dd (H, (S, 1_S), \triangleleft )$ be the category whose class of
objects is the set ${\mathcal S} {\mathcal E} {\mathcal S}$$(H,
(S, 1_S), \triangleleft)$. A morphism $\psi : (\ast, \, \triangleright, \,
f) \to (\ast', \, \triangleright', \, f')$ in $\Dd (H, (S, 1_S),
\triangleleft )$ is a morphism of groups $\psi : H \ltimes_{\Omega} S \to H
\ltimes_{\Omega'} S$ such that diagram \equref{D2} is
commutative. The category $\Dd (H, (S, 1_S), \triangleleft )$ is a
groupoid, that is any morphism is an isomorphism. We obtain:

\begin{corollary}\textbf{(The Schreier theorem for unified
products)}\colabel{clasif2bb} Let $H$ be a group, $(S, 1_S)$ a
pointed set and $\triangleleft$ a right action of $H$ on $S$. Then there
exists a bijection between the set of objects of the skeleton of
the category $\Dd (H, (S, 1_S), \triangleleft )$ and ${\mathcal
H}^{2}_{\ltimes} (H, (S, 1_S), \triangleleft)$.
\end{corollary}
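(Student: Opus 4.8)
The plan is to recognize that, after the substantive work already carried out in \prref{clasif2}, this statement is essentially a bookkeeping identification of two descriptions of the same quotient, exactly parallel to the proof of \coref{clasif1a}. Recall that the set of objects of the skeleton of a category is a full set of representatives for the isomorphism classes of its objects; equivalently, it is in canonical bijection with the set of isomorphism classes of objects of the category. Thus the task reduces to showing that the set of isomorphism classes of objects of $\Dd (H, (S, 1_S), \lhd )$ coincides with the quotient ${\mathcal S}{\mathcal E}{\mathcal S}(H, (S, 1_S), \lhd)/\approx = {\mathcal H}^{2}_{\ltimes} (H, (S, 1_S), \lhd)$.

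First I would spell out the isomorphism relation in $\Dd (H, (S, 1_S), \lhd )$. By construction the objects of this category are exactly the triples $(\ast, \triangleright, f) \in {\mathcal S}{\mathcal E}{\mathcal S}(H, (S, 1_S), \lhd)$, and a morphism from $(\ast, \triangleright, f)$ to $(\ast', \triangleright', f')$ is a morphism of groups $\psi : H \ltimes_{\Omega} S \to H \ltimes_{\Omega'} S$ making the diagram \equref{D2} commute. Since this category was already observed to be a groupoid, two objects are isomorphic precisely when there exists at least one morphism between them in $\Dd (H, (S, 1_S), \lhd )$.

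Next I would invoke the characterization recorded immediately after \deref{equiv111}, which is a direct consequence of \prref{clasif2}: a morphism $\psi$ of the above form exists if and only if the two triples are cohomologous, i.e. $(\ast, \triangleright, f) \approx (\ast', \triangleright', f')$. Combining this with the previous step, the isomorphism relation on the objects of $\Dd (H, (S, 1_S), \lhd )$ is exactly the relation $\approx$. Consequently the isomorphism classes of objects are precisely the $\approx$-classes, and the assignment sending an object $(\ast, \triangleright, f)$ to its class in ${\mathcal H}^{2}_{\ltimes} (H, (S, 1_S), \lhd)$ descends to a well-defined bijection from the set of isomorphism classes, hence from the object set of the skeleton, onto ${\mathcal H}^{2}_{\ltimes} (H, (S, 1_S), \lhd)$.

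There is essentially no analytic obstacle here, since all the genuine content, namely the explicit form \equref{p5b} of $\psi$, the compatibility conditions \equref{p2b}--\equref{p4b}, and the fact that every such $\psi$ is automatically an isomorphism, was already established in \prref{clasif2}. The only point requiring a little care is the purely categorical one: one must interpret ``set of objects of the skeleton'' as a full set of representatives for the isomorphism classes, so that it is in genuine bijection (and not merely in surjection) with the quotient ${\mathcal H}^{2}_{\ltimes} (H, (S, 1_S), \lhd)$; this is guaranteed once we know $\approx$ is an equivalence relation, which was noted following \deref{equiv111}. The argument is the exact analogue, with $\Dd$, $\approx$ and ${\mathcal H}^{2}_{\ltimes}$ in place of $\Cc$, $\sim$ and ${\mathcal K}^{2}_{\ltimes}$, of the proof of \coref{clasif1a}.
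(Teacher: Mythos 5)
Your proposal is correct and follows exactly the route the paper takes: the paper also deduces \coref{clasif2bb} directly from \prref{clasif2}, the observation recorded after \deref{equiv111} that $\approx$ coincides with the existence of a morphism making \equref{D2} commute (every such morphism being an isomorphism, so that $\Dd (H, (S, 1_S), \lhd )$ is a groupoid), and the identification of the skeleton's objects with the isomorphism classes, i.e. with the $\approx$-classes forming ${\mathcal H}^{2}_{\ltimes} (H, (S, 1_S), \lhd)$. No gap; the argument is the same bookkeeping identification, in exact analogy with \coref{clasif1a}.
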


Thus it follows from \coref{clasif2bb} that ${\mathcal
H}^{2}_{\ltimes} (H, (S, 1_S), \triangleleft)$ is for the classification of
unified products of groups the counterpart of the second
cohomology group for the classification of extensions of an
abelian group by a group \cite[Theorem 7.34]{R}.

\begin{corollary}\colabel{cazspec1}
Let $H$ be a group, $\Omega (H) = \bigl( (S, 1_S, \ast),
\triangleleft, \, \triangleright, \, f \bigl)$ a group extending
structure of $H$ and $(H, G, \triangleright', f')$ a crossed system of
groups. Then there exists $\psi : H \ltimes_{\Omega} S \to
H\#_{\triangleright'}^{f'} G$ an isomorphism of groups that stabilizes $H$ if
and only if the right action $\triangleleft$ of $\Omega (H)$ is the trivial
one and there exists a pair $(r, v)$, where $r: S \rightarrow H$
is a unitary map, $v: (S, \ast) \rightarrow G$ is an isomorphism
of groups such that:
\begin{eqnarray}
s \triangleright h &=&  r(s)\, \bigl(v(s) \triangleright'
h \bigl) \, r(s)^{-1}  \eqlabel{p2xx} \\
f(s_1, s_2) &=& r(s_{1}) \, \bigl( v(s_{1}) \triangleright' r(s_{2}) \bigl)
\, f' \bigl( v(s_{1}), \, v(s_2) \bigl) \,\, r(s_{1} * s_{2})^{-1}
\eqlabel{p4xx}
\end{eqnarray}
for all $s$, $s_{1}$, $s_{2} \in S$ and $h \in H$.
\end{corollary}

\begin{proof}
We apply \prref{clasif1} in the case that $\triangleleft'$ is the trivial
action in the group extending structure $\Omega ' (H) = \bigl( (G,
1_{G}, \ast'), \triangleleft', \, \triangleright', \, f' \bigl)$.
\end{proof}

Now we give necessary and sufficient conditions for a unified
product to be isomorphic to a bicrossed product of groups such
that $H$ is stabilized.

\begin{corollary}\colabel{cazspec2}
Let $H$ be a group, $\Omega (H) = \bigl( (S, 1_S, \ast),
\triangleleft, \, \triangleright, \, f \bigl)$ a group extending
structure of $H$ and $(H, G, \triangleleft', \triangleright')$ a matched pair of
groups. Then there exists $\psi : H \ltimes_{\Omega} S \to H
\bowtie G$ an isomorphism of group that stabilizes $H$ if and only
if there exists a pair $(r, v)$, where $r: S \rightarrow H$ is a
unitary map, $v: (S, \triangleleft) \rightarrow (G, \triangleleft') $ is a unitary
map and an isomorphism of $H$-sets such that:
\begin{eqnarray}
(s \triangleright h) \, r(s \triangleleft h) &=& r(s)\, \bigl(v(s) \triangleright'
h \bigl)\eqlabel{p2yy} \\
v(s_{1} \ast s_{2}) &=& \bigl( v(s_{1}) \triangleleft' r(s_{2})\bigl) \,
 \, v(s_{2}) \eqlabel{p3yy}\\
f(s_1, s_2) &=& r(s_{1}) \, \bigl( v(s_{1}) \triangleright' r(s_{2}) \bigl)
\, \, r(s_{1} * s_{2})^{-1} \eqlabel{p4yy}
\end{eqnarray}
for all $s$, $s_{1}$, $s_{2} \in S$ and $h \in H$.
\end{corollary}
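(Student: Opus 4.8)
The plan is to derive \coref{cazspec2} as a direct specialization of \thref{clasif1}, exactly parallel to the proof of \coref{cazspec1} but with the roles of the action and the cocycle interchanged: there the target extending structure had trivial action $\lhd'$, whereas here the target extending structure will have trivial cocycle $f'$. First I would set $\Omega'(H) = \bigl( (G, 1_G, \ast'), \triangleleft', \, \triangleright', \, f' \bigl)$ with $f'$ the trivial cocycle. By \exref{cazparb}, $\Omega'(H)$ is then a group extending structure precisely when $(G, \ast')$ is the underlying group structure of $G$ and $(H, G, \rhd', \lhd')$ is a matched pair of groups, and in that situation the associated unified product is exactly the bicrossed product $H \ltimes_{\Omega'} G = H \bowtie G$. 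Thus the target appearing in \thref{clasif1} becomes the bicrossed product of the hypothesis, and the diagram there specializes to \equref{D11}.

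Next I would invoke \thref{clasif1} verbatim: every morphism of groups $\psi : H \ltimes_{\Omega} S \to H \bowtie G$ making \equref{D11} commute is of the form \equref{p5}, namely $\psi(h, s) = \bigl( h\, r(s),\, v(s) \bigl)$, for a unique pair of unitary maps $r : S \to H$ and $v : S \to G$ satisfying the compatibilities \equref{p1}--\equref{p4}; moreover $\psi$ is an isomorphism of groups if and only if $v$ is bijective. It then remains only to read off what \equref{p1}--\equref{p4} say once $f'$ is trivial and $\ast'$ is the group multiplication of $G$. Condition \equref{p1} asserts exactly that $v$ is a morphism of right $H$-sets $(S, \lhd) \to (G, \lhd')$; combined with the bijectivity forced by $\psi$ being an isomorphism, this is precisely the statement that $v$ is a unitary isomorphism of $H$-sets, as demanded in part $(2)$. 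Condition \equref{p2} is literally \equref{p2yy}. Since $\ast'$ is now the product of $G$, condition \equref{p3} collapses to \equref{p3yy}. Finally, since $f'$ is trivial, the last factor in \equref{p4} equals $1_H$, and after moving $r(s_1 \ast s_2)$ to the other side the condition rearranges to \equref{p4yy}. This establishes the equivalence of $(1)$ and $(2)$.

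There is essentially no hard computational step here; the entire content is already packaged in \thref{clasif1} and \exref{cazparb}. The only point requiring care is the bookkeeping in the specialization: recognizing that the trivial cocycle $f'$ annihilates the final cocycle factor in \equref{p4}, that the group operation $\ast'$ erases the explicit $\ast'$ symbol from \equref{p3}, and that \equref{p1} together with the bijectivity coming from ``$\psi$ is an isomorphism'' is exactly the assertion that $v$ is an isomorphism of the right $H$-sets $(S, \lhd)$ and $(G, \lhd')$. I would therefore keep the proof short, citing \thref{clasif1} for the parametrization and isomorphism criterion, \exref{cazparb} for the identification of $H \ltimes_{\Omega'} G$ with $H \bowtie G$, and then simply matching \equref{p1}--\equref{p4} against \equref{p2yy}--\equref{p4yy}.
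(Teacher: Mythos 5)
Your proposal is correct and follows essentially the same route as the paper's own proof: specialize \thref{clasif1} to the target extending structure $\Omega'(H)$ with trivial cocycle $f'$, identify $H \ltimes_{\Omega'} G$ with $H \bowtie G$ via \exref{cazparb}, and read off \equref{p1}--\equref{p4} as \equref{p2yy}--\equref{p4yy} together with the statement that $v$ is an isomorphism of right $H$-sets. The only difference is expository: the paper compresses the final read-off into ``the rest is obvious,'' whereas you spell it out.
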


\begin{proof}
We apply \prref{clasif1} in the case when $f'$ is the trivial
cocycle in the group extending structure $\Omega ' (H) = \bigl(
(G, 1_{G}, \ast'), \triangleleft', \, \triangleright', \, f'
\bigl)$.
\end{proof}

\thref{clasif1a} offers the theoretical answer to the ES-problem
at the level of groups. The challenge that remains is a
computational one: for a given group $H$ and a pointed set $(S,
1_S)$ we have to compute explicitly the cohomological object
${\mathcal K}^{2}_{\ltimes} (H, (S, 1_S))$: it classifies up to an
isomorphism of groups that stabilizes $H$ all groups that contain
$H$ as a subgroup of index $|S|$. We end the paper with an
explicit example. For a group $H$ we shall denote by ${\mathcal T}
(H) \subseteq H \times {\rm Aut} (H)$ the set consisting of all
pairs $(h_0, \, D)$, where $h_0\in H$, $D : H \to H$ is an
automorphism of $H$ such that for any $h\in H$:
\begin{equation}\eqlabel{ind2aa}
D(h_0) = h_0, \qquad D^2 (h) = h_0 h h_0^{-1}
\end{equation}

\begin{proposition}\prlabel{indice2}
Let $H$ be a group and $S = \{1_S, \, c\}$ a set with two
elements. Then:

\begin{enumerate}
\item[(1)] There exists a bijection between the set ${\mathcal G}
{\mathcal E} {\mathcal S}$$(H, (S, 1_S))$ of all group extending
structures of $H$ and ${\mathcal T} (H)$. The bijection is given
such that the group extending structure $\bigl( (S, 1_S, \ast),
\triangleleft, \, \triangleright, \, f \bigl)$ corresponding to
$(h_0, \, D) \in {\mathcal T} (H)$ is defined as follows:
$\triangleleft$ is the trivial action of $H$ on $S$, $\ast$ is
given by $c \ast c = 1_S$, the left action $\triangleright : S
\times H \to H$ and the cocycle $f: S \times S \to H$ are given
for any $h\in H$ by:
\begin{equation}\eqlabel{ind2}
c \triangleright h := D(h), \qquad f(c, c) := h_0
\end{equation}

\item[(2)] There exists a bijection ${\mathcal K}^{2}_{\ltimes}
(H, (S, 1_S)) \cong {\mathcal T} (H)/ \sim $, where $\sim$ is the
equivalence relation defined as follows: $(h_0, D) \sim (h'_0,
D')$ if and only if there exists $g\in H$ such that for any $h\in
H$
\begin{equation}\eqlabel{ind2bb}
h_0 = g D' (g) \, h'_0, \qquad D(h) = g D'(h) g^{-1}
\end{equation}
The bijection between ${\mathcal T} (H)/ \sim $ and the
isomorphism classes of all groups that contain and stabilize $H$
as a subgroup of index $2$ is given by
$$
\overline{(h_0, \, D)} \mapsto H\ltimes_{(h_0, D)} S
$$
where we denote by $H\ltimes_{(h_0, D)} S$ the unified product
associated to the group extending structure constructed in
\equref{ind2} for a given $(h_0, D) \in {\mathcal T} (H)$ and
$\overline{(h_0, \, D)}$ is the equivalence class of $(h_0, \, D)$
via the relation $\sim$. Explicitly, the multiplication on the
group $H\ltimes_{(h_0, D)} S$ is given for any $h$, $h_{1}$, $h_2
\in H$ by:
\begin{eqnarray*}
(h_{1}, 1_{S}) \cdot (h_{2}, 1_{S}) &=& (h_{1} h_{2}, 1_{S}),
\qquad (1, c) \cdot (1, c) = (h_{0}, 1_{S}) \\
(h, 1_{S}) \cdot (1, c) &=& (h, c), \qquad \quad \,\, (1, c) \cdot
(h, 1_{S}) = (D(h), c)
\end{eqnarray*}
\end{enumerate}
\end{proposition}

\begin{proof}
$(1)$ We have to compute the set of all maps $(\ast,
\triangleleft, \, \triangleright, \, f)$ satisfying the
normalizing conditions \equref{ES3}, \equref{nor1a} as well as the
compatibility conditions $(ES1)$-$(ES7)$. First we prove that $c
\triangleleft h = c$, for all $h\in H$, i.e. $\triangleleft$ is
the trivial action and $c \ast c = 1_S$, i.e. $S = C_2$, the
cyclic group of order $2$. Indeed, using \equref{nor1a} we already
know that $1_S \triangleleft h = 1_S$, for all $h \in H$. Let $f :
H \to S$ be a map such that $ c \triangleleft h = f(h)$, for all
$h \in H$. Then, $f$ is a unit preserving map since \equref{nor1a}
holds. We will prove that $f (h) = c$, for all $h \in H$, and
hence $\triangleleft$ is the trivial action. Indeed, $(ES1)$ tells
us that $\triangleleft$ is a right action: the condition $c
\triangleleft (g h) = (c \triangleleft g) \triangleleft h$ takes
the form $f (g h) = f(g) \triangleleft h$, for all $g$, $h \in H$.
Assume, that there exists $g \in H$ such that $f (g) = 1_S$. Then
we obtain that $f (g h) = 1_S \triangleleft h = 1_S$, i.e. $f (gh)
= 1_S$, for all $h\in H$. Thus, $f$ is the trivial map, that is $c
\triangleleft h = 1_S$, for all $h \in H$. Then, axiom $(ES2)$
applied for $s_1 = s_2 = s_3 = c$ implies that $ (c \ast c) \ast c
= c \ast c$ and using $(ES7)$ we obtain $c = 1_S$, which is a
contradiction.

Thus, $\triangleleft$ is the trivial action; it follows from
$(ES1)$ that $\ast$ is a group structure on $S$ i.e. $c \ast c =
1_S$. Now, any normalizing map $ \triangleright : S \times H \to
H$ is uniquely implemented by a map $D : H \to H$ such that $c
\triangleright h = D(h)$, for all $h\in H$, and any normalized map
$f : S \times S \to H$ is uniquely determined by an element $h_0
\in H$ such that $f (c, c) = h_0$. Now we can
easily see that the compatibility conditions $(ES1)$, $(ES2)$,
$(ES4)$ are trivially fulfilled, while $(ES3)$ is equivalent to
the fact that $D$ is an endomorphism of $H$, axiom $(ES5)$
takes the equivalent form given by the right hand side of
\equref{ind2aa} while $(ES6)$ is the left hand side of
\equref{ind2aa}.

$(2)$ Let $(\ast, \triangleleft, \, \triangleright, \, f)$ and
$(\ast', \triangleleft', \, \triangleright', \, f')$ be two group
extending structures associated to $(h_0, D)$ and $(h'_0, D') \in
{\mathcal T} (H)$. Then, $(\ast, \triangleleft, \, \triangleright,
\, f) \sim (\ast', \triangleleft', \, \triangleright', \, f')$ in
the sense of \deref{equiv1} if there exists a pair $(r, v)$ of
unitary maps $r: S \rightarrow H$, $v: S \rightarrow S$ such that
$v$ is a bijection on the set $S$ and the compatibility conditions
\equref{p1} - \equref{p4} are fulfilled. Since $|S| = 2$ and $v$
is a unitary bijection we obtain that $v$ is the identity map on
$S$. On the other hand, the unitary map $r: S \to H$ is given by
an element $g \in H$ such that $r (c) = g$. Taking into account
the construction of the group extending structures from
\equref{ind2} we can easily show that the compatibility conditions
\equref{p1} - \equref{p4} take the equivalent form
\equref{ind2bb}.
\end{proof}

\begin{examples}\exlabel{faraout}
1. Let $H$ be a group which has no outer automorphisms. The
typical example is $S_n$, for $n \neq 6$. Then, in this case
${\mathcal T} (H)$ identifies with the set of all pairs $(h_0, a)
\in H \times H$ such that :
\begin{equation}\eqlabel{ind2aabb}
a h_0 = h_0 a, \qquad a^2 h a^{-2} = h_0 h h_0^{-1}
\end{equation}
for all $h\in H$. Moreover, ${\mathcal K}^{2}_{\ltimes} (H, (S,
1_S)) \simeq {\mathcal T} (H)/ \sim $, where $(h_0, a) \sim (h'_0,
a')$ if and only if there exists $g\in H$ such that $h_0 = g a' g
a'^{-1} h'_0$ and $a h a^{-1} = g a' h a'^{-1} g^{-1}$, for all
$h\in H$.

2. Let $H$ be an abelian group. Then ${\mathcal T} (H)$ is the set
of all pairs $(h_0, \, D) \in H \times {\rm Aut} (H)$, such that
$D^2 = {\rm Id}_H$ and $D (h_0) = h_0$. Two such pairs  $(h_0, D)$
and  $(h'_0, D')$ are equivalent if and only if $D = D'$ and there
exists $g\in H$ such that $h_0 = g D'(g) h'_0$.

In particular, if $H = \ZZ$ we obtain that ${\mathcal
K}^{2}_{\ltimes} (\ZZ, (S, 1_S)) \cong \{ \overline{(0, \, {\rm
Id}_{\ZZ})}, \, \overline{(0, \, - {\rm Id}_{\ZZ})}, \,
\overline{(1, \, {\rm Id}_{\ZZ})} \}$. Thus, up to an isomorphism
of groups that stabilizes $\ZZ$ there are three groups that
contain $\ZZ$ as a subgroup of index $2$: the direct product $\ZZ
\times C_2$ corresponding to $\overline{(0, \, {\rm Id}_{\ZZ})}$,
the semidirect product $\ZZ \ltimes C_2$ corresponding to
$\overline{(0, \, - {\rm Id}_{\ZZ})}$ and the crossed product $\ZZ
\#^f C_2$ associated to the non-trivial cocycle coressponding to
$\overline{(1, \, {\rm Id}_{\ZZ})}$.
\end{examples}

\section{Conclusions and outlooks}
The crossed product and the bicrossed product for groups served as
models for similar constructions in other fields like: algebras,
Lie groups and Lie algebras, locally compact groups, Hopf
algebras, locally compact quantum groups, groupoids, etc. Thus,
the construction of the unified product presented in this paper at
the level of groups can be generalized to all the fields above.
The second direction for further study is related to the existence
of hidden symmetries of an $H$-principal bundle. Let $H$ and $G$
be Lie groups. To be consistent with the conventions of our
construction we consider the action of $H$ in an $H$-principal
bundle to be a left action. The $H$-principal bundle $(E, S, \pi)$
\cite{KN} is said to be (right) $G$-equivariant if there are
differentiable right actions of $G$ on $E$ and $S$ such that $\pi$
is $G$-equivariant and the left action of $H$ and the right action
of $G$ on $E$ commute; we say that the $G$-action is fiber
transitive if the action of $G$ on $S$ is transitive.

\textit{Let  $K$ be a Lie group and let $(E, S, \pi)$  be an
$K$-equivariant $H$-principal bundle. Does there exist a Lie group
$G\supset K$ endowed with actions on $E$ and $S$ that extend the
corresponding $K$-actions such that $(E, S, \pi)$ becomes
$G$-equivariant  fiber transitive?}

If such a $G$ exists we say that the bundle $(E, S, \pi)$ has
\textit{hidden symmetries}. The unified product associated to an
extending structure provides an answer to the above question for
$E = H \times S$ (the trivial $H$-bundle over $S$), $K=H$, and the
actions on $S$ and $E$ given respectively by $h\cdot
k:=s\triangleleft k$ and $(h, s)\cdot k:= (h(s\triangleright k),
s\triangleleft k)$, for all $h\in H$, $s\in S$, $k\in K$. If the
right action of $H$ on $S$ has a fixed point $1_{S}$, and $S$ has
an $H$-group structure then the bundle becomes $(H
\ltimes_{\Omega} S)$-equivariant fiber transitive. Therefore, the
existence of $H$-group structures on $S$ is related to the
existence of hidden symmetries. In the above discussion we
suppressed any reference to the differential or algebraic
structures on the $H$-bundle. The fact that our product
construction is compatible with all the additional structures is
the subject of a forthcoming study.

\section*{Acknowledgement}
We wish to thank Bogdan Ion for pointing out to us the relevance
of the constructions in this paper to the problem concerning the
existence of hidden symmetries for an $H$-principal bundle.


\begin{thebibliography}{99}

\bibitem{adem}
Adem, A., R. J. Milgram, R. J. - Cohomology of finite groups,
Berlin: Springer, 2nd Edition, 2004.

\bibitem{ACIM}
Agore, A.L.,  Chirvasitu, A.,  Ion, B.,  Militaru, G. - Bicrossed
products for finite groups. {\sl Algebr. Represent. Theory}, {\bf
12} (2009), 481--488.

\bibitem{am-2011}
Agore, A.L., Militaru, G., Extending structures II: the quantum
version, {\sl J. Algebra} {\bf 336}(2011), 321--341.

\bibitem{am2}
Agore, A.L., Militaru, G. - Unified products and split extensions of Hopf algebras,
{\sl Contemporary Math. AMS} {\bf 585} (2013), 1--15.

\bibitem{am1}
Agore, A.L., Militaru, G. - Crossed product of groups.
Applications, {\sl Arabian J. for Science and Engineering}, {\bf
33} (2008), 1--17.



\bibitem{Cohn}
Cohn, P.M., -  A remark on the general product of two infinite
cyclic groups, {\sl Arch. Math. (Basel)} {\bf 7}(1956), 94--99.

\bibitem{Douglas}
Douglas, J. - On finite groups with two independent generators. I,
II, III, IV. {\sl Proc. Nat. Acad. Sci. U. S. A.} {\bf 37} (1951),
604--610, 677--691, 749--760, 808--813.

\bibitem{vil}
Fern\'{a}ndez Vilaboa, J. M.,  Gonz\'{a}lez Rodriguez, R and
Rodriguez Raposo, A. B.,  - Partial and unified crossed products
are weak crossed products, {\sl Contemporary Math. AMS} {\bf 585}
(2013), 261--274.

\bibitem{holder}
O. H\"{o}lder, {Bildung zusammengesetzter Gruppen}, {\sl Math.
Ann.} {\bf 46}(1895), 321--422.


\bibitem{panaite}
Jara Martinez, P., Lopez Pena, J., Panaite, F. and Van Oystaeyen,
F.,  - On Iterated Twisted Tensor Products of Algebras, {\sl
Internat. J. Math.} 19 (2008), 1053--1101.

\bibitem{KN}
Kobayashi, S., Nomizu, K., Foundations of differential geometry,
Wiley Classics Library, 1996.

\bibitem{Ore}
O. Ore, Structures and group theory. I. {\sl Duke Math. J.} {\bf
3} (1937), no. 2, 149--174.

\bibitem{Redei}
R\'{e}dei, L., - Zur Theorie der faktorisierbaren Gruppen I, {\sl
Acta Math. Acad. Sci. Hung.}, {\bf 1}(1950), 74--98.

\bibitem{R}
Rotman, J., -  An introduction to the theory of groups. Fourth
edition. Graduate Texts in Mathematics 148, Springer-Verlag, New
York, 1995.

\bibitem{Takeuchi}
Takeuchi, M. - Matched pairs of groups and bismash products of
Hopf algebras, {\sl Comm.  Algebra} {\bf 9}(1981), 841--882.

\bibitem{Szep}
Sz\' ep, J., - \" U†ber die als Produkt zweier Untergruppen
darstellbaren endlichen Gruppen. {\sl Comment. Math. Helv.} {\bf
22}(1949), 31--33.

\bibitem{zappa}
Zappa, G., - Sulla costruzione dei gruppi prodotto di due dati
sottogruppi permutabili tra loro, {\sl Atti Secondo Congresso Un.
Mat. Ital., Bologna, 1940}, pp. 119--125. Edizioni Cremonense,
Rome, 1942.

\bibitem{WW}
Wiegold, J. and Williamson, A. G., - The factorisation of the
alternating and symmetric groups., {\sl Math. Z.} {\bf 175}
(1980), 171--179.

\end{thebibliography}
\end{document}